\newcommand{\ep}{{\varepsilon}}
\newcommand{\intg}{{\mathbb Z}}
\newcommand{\cplx}{{\mathbb C}}
\newcommand{\real}{{\mathbb R}}
\newcommand{\fifi}{{\mathbb F}_q}
\newcommand{\fifibar}{\bar{\mathbb F}_q}
\newcommand{\fifisq}{{\mathbb F}_{q^2}}
\newcommand{\cP}{{\mathcal P}}
\newcommand{\cX}{{\mathcal X}}
\newcommand{\cS}{{\mathcal S}}
\newcommand{\bG}{{\bf G}}
\newcommand{\U}{\mathrm{U}}
\newcommand{\GL}{\mathrm{GL}}
\newcommand{\btau}{\boldsymbol{\tau}}
\newcommand{\bmu}{\boldsymbol{\mu}}
\newcommand{\blam}{\boldsymbol{\lambda}}
\newcommand{\bnu}{\boldsymbol{\nu}}
\newcommand{\spanning}{\mathrm{-span}}
\newcommand{\ch}{\mathrm{ch}}
\newtheorem{thm}{Theorem}[section]
\newtheorem{lemma}{Lemma}[section]
\newtheorem{prop}{Proposition}[section]
\def\adots{\mathinner{\mkern2mu\raise0pt\hbox{.}  
\mkern2mu\raise4pt\hbox{.}\mkern1mu
\raise7pt\vbox{\kern7pt\hbox{.}}\mkern1mu}}
\numberwithin{equation}{section}
\begin{document}

\bibliographystyle{ieeetr}

\title[Semisimple symplectic characters of finite unitary groups]
{Semisimple symplectic characters of finite unitary groups}
\author{C. Ryan Vinroot}
  \address{Mathematics Department\\
           The College of William and Mary\\
           Williamsburg, VA  23187-8795}
   \email{vinroot@math.wm.edu}

\begin{abstract}
Let $G = \U(2m, \fifisq)$ be the finite unitary group, with $q$ the power of an odd prime $p$.  We prove that the number of irreducible complex characters of $G$ with degree not divisible by $p$ and with Frobenius-Schur indicator $-1$ is $q^{m-1}$.  We also obtain a combinatorial formula for the value of any character of $\U(n, \fifisq)$ at any central element, using the characteristic map of the finite unitary group.
\\
\\
2000 {\it AMS Subject Classification:}  20C33 (05E05)
\end{abstract}

\maketitle

\section{Introduction}

Let $\U(n,\fifisq)$ denote the finite unitary group defined over the finite field $\fifi$ with $q$ elements, where $q$ is a power of the prime $p$.  A {\em semisimple} character of $\U(n,\fifisq)$ is an irreducible complex character with degree prime to $p$.  If an irreducible complex character $\chi$ of a finite group $G$ is real-valued, then $\chi$ is called {\em symplectic} if its associated complex representation cannot be defined over the real numbers (otherwise, the real-valued character $\chi$ is called {\em orthogonal}; see Section \ref{FS}).  In \cite{GoVi08}, it was conjectured that when $q$ is odd, the group $\U(2m, \fifisq)$ has exactly $q^{m-1}$ semisimple symplectic characters.  The main result of this paper is the proof of this conjecture.  

In Section \ref{Pre}, we give definitions and notation to be used in the rest of the paper.  In Section \ref{Char}, we give a combinatorial characterization of the irreducible characters of the finite unitary group.  In particular, Theorem \ref{Characters} gives a map between the character ring of the finite unitary group and a ring of symmetric functions, coming from the development in \cite{ThVi07}.  This map is the primary combinatorial tool used in the rest of the paper.  In Section \ref{CentralCharacters}, we obtain a formula for the value of any character of the finite unitary group at any central element, in Theorem \ref{CentralValues}, which is analogous to the corresponding formula for the finite general linear group.  In Section \ref{RegSemi}, we give results which tell when a semisimple (or regular) real-valued character of $\U(n, \fifisq)$ is symplectic or orthogonal, and relates this to the value of the character at a specific element of the center, which is exactly where we apply Theorem \ref{CentralValues}.  Section \ref{Uni} is a bit of an aside which explains why the results of Section \ref{RegSemi} do not extend to all characters of the finite unitary group.

Finally, the main result is proven in Sections \ref{SelfDual} and \ref{Main}.  In Section \ref{SelfDual}, a bijection is established between the real-valued semisimple characters of $\U(n, \fifisq)$ and the self-dual polynomials in $\fifi[x]$ of degree $n$ (when $q$ is odd).  In Section \ref{Main}, after several Lemmas, it is shown that the semisimple symplectic characters of $\U(2m, \fifisq)$, $q$ odd, correspond to the self-dual polynomials in $\fifi[x]$ of degree $2m$ with constant term $-1$, and the proof of Theorem \ref{MainThm} is completed by counting these polynomials.

The context of the main result of this paper, and the reason why it was originally conjectured, are as follows.  Let $\tau$ denote the transpose-inverse automorphism of the group $U_n = \U(n, \fifisq)$, so that $\tau(g) = {^T g}^{-1}$ for any $g \in \U(n,\fifisq)$, and let $U_n \langle \tau \rangle$ denote the split extension of the group $U_n$ by the order $2$ automorphism $\tau$.  Then, the irreducible characters of $U_n$ which extend to irreducible characters of $U_n \langle \tau \rangle$ are exactly the $\tau$-invariant characters, which are the real-valued characters of $U_n$.  If $\chi$ is an irreducible real-valued character of $U_n$, then by \cite[Corollary 5.3]{ThVi07}, an irreducible character of $U_n \langle \tau \rangle$ which extends $\chi$ is real-valued exactly when $\chi$ is orthogonal, and takes non-real values whenever $\chi$ is symplectic.

As proven in \cite{GoVi08}, when $n$ is odd, there exists an element $x \tau$ of $U_n \langle \tau \rangle$ such that $(x\tau)^2$ is regular unipotent in $U_n$.  Part of the main result of \cite{GoVi08} is that if $\psi$ is an irreducible character of $U_n \langle \tau \rangle$ which extends a semisimple real-valued character of $U_n$, and $n$ is odd, then $\psi(x\tau) = \pm 1$.  In fact, in the case that $n$ is odd, every real-valued semisimple character of $U_n$ is orthogonal (see Section \ref{RegSemi}).

When $n$ is even, in \cite{GoVi08} it is shown that although $U_n \langle \tau \rangle$ has no elements which square to regular unipotent elements of $U_n$, there does exist an element $y \tau$ such that $(y\tau)^2 = -u$, where $u$ is regular unipotent in $U_n$.  In \cite[Theorem 6.4]{GoVi08}, it is proven that if $n=2m$ is even and $q$ is odd, and $\psi$ is an irreducible character of $U_n \langle \tau \rangle$ which is the extension of a semisimple symplectic character of $U_n$, then $\chi(y \tau)$ is a nonzero purely imaginary number, where $(y\tau)^2 = -u$ and $u$ is regular unipotent in $U_n$.  As a consequence, the element $y\tau$ is not a real element of $U_n \langle \tau \rangle$.  It is then conjectured that $\chi(y\tau)$ is equal to $\pm \sqrt{-q}$, and that there are exactly $q^{m-1}$ semisimple symplectic characters of $\U(2m, \fifisq)$.  Here, we prove the second part of this conjecture, in Theorem \ref{MainThm} below.  Counting the number of semisimple symplectic characters has significance in proving the first part of this conjecture, as it could be used in a calculation similar to that in the proof of \cite[Theorem 6.3]{GoVi08}.

As discussed in the Introduction of \cite{GoVi08}, there is a more general conjecture about the values of the characters of $U_n \langle \tau \rangle$ which are extended from irreducible real-valued characters of $U_n$, and their correspondence with characters extended from $\GL(n, \fifi)$.  In particular, let $G_n = \GL(n, \fifi)$, and let $G_n \langle \tau \rangle$ be the split extension of $G_n$ by the transpose-inverse automorphism.  When $n=2m$ is even and $q$ is odd, we expect there to be exactly $q^{m-1}$ semisimple real-valued characters of $G_n$ with the property that the extensions of these characters to $G_n \langle \tau \rangle$ take the values $\pm \sqrt{q}$ on elements $w \tau$, where $(w\tau)^2$ is the negative of a regular unipotent element of $G_n$.

In \cite{Go76}, Gow proves that the rational Schur index of any irreducible character of $\U(n, \fifisq)$ is at most $2$.  It is not known in general which characters of the finite unitary group have Schur index $1$ or $2$, although partial results are obtained in \cite{Go76, Ohm1, Ohm2}.  By the Brauer-Speiser Theorem, any symplectic character of a finite group has rational Schur index $2$, and so the main result of this paper also gives a lower bound for the number of irreducible characters of $\U(2m, \fifisq)$, $q$ odd, with Schur index $2$ (we could also add certain regular and unipotent characters to this count; see Section \ref{RegSemUni}).
\\
\\
\noindent {\bf Acknowledgements. }  The author thanks Fernando Rodriguez-Villegas for very helpful communication regarding Theorem \ref{CentralValues}.

\section{Preliminaries} \label{Pre}

\subsection{Frobenius-Schur indicators} \label{FS}

Let $G$ be a finite group, and let $(\pi, V)$ be an irreducible complex representation of $G$ with character $\chi$.  Define the {\em Frobenius-Schur indicator} of $\chi$ (or of $\pi$), denoted $\ep(\chi)$ (or $\ep(\pi)$), to be
$$ \ep(\chi) = \frac{1}{|G|} \sum_{g \in G} \chi(g^2).$$
By classical results of Frobenius and Schur, $\ep(\chi)$ always takes the value $0$, $1$, or $-1$, and $\ep(\chi) = 0$ if and only if $\chi$ is not a real-valued character.  Furthermore, if $\chi$ is real-valued, then $\ep(\chi) = 1$ if and only if $(\pi, V)$ is a real representation.  That is, $\ep(\chi) = 1$ if and only if there exists a basis for $V$ such that the matrix representation corresponding to $\pi$ with respect to this basis has image in the invertible matrices defined over $\real$.

If $\chi$ is a real-valued irreducible character of $G$, then $\chi$ is called an {\em orthogonal character} of $G$ if $\ep(\chi) = 1$, and $\chi$ is called a {\em symplectic character} of $G$ if $\ep(\chi) = -1$.

\subsection{The finite unitary group} \label{UnitaryDefn}

Let $q$ be a power of a prime $p$, let $\fifi$ denote a finite field with $q$ elements, and let $\fifibar$ be a fixed algebraic closure of $\fifi$.  Now let $\bG_n = \GL(n, \fifibar)$ be the group of invertible $n$-by-$n$ matrices with entries in $\fifibar$.  For $g = (a_{ij}) \in \bG_n$, let ${^T g}$ denote the transpose of $g$, so $^T g = (a_{ji})$.  Now, define the map $F$ on $\bG_n$ by
$$ F: g=(a_{ij}) \mapsto {^T (a_{ij}^q)^{-1}} = (a_{ji}^q)^{-1}.$$
The {\em finite unitary group}, denoted $\U(n,\fifisq)$, is defined to be the set of $F$-fixed points of $\bG_n$, so
$$ \U(n, \fifisq) = \{ g \in \GL(n, \fifibar) \; \mid \; F(g) = g \}.$$
We will also denote $\U(n,\fifisq)$ by $U_n$.

\subsection{Partitions and multi-partitions} \label{parts}
Let 
$$\cP=\bigcup_{n\geq 0} \cP_n,\qquad \text{where} \qquad \cP_n=\{\text{partitions of $n$}\}.$$
For $\nu = (\nu_1, \nu_2, \ldots, \nu_l) \in \cP_n$, $n \geq 1$, the {\em length} $\ell(\nu)$ of $\nu$ is the number of parts $\ell$ of $\nu$, and the {\em size} $|\nu|$ of $\nu$ is the sum of the parts of $\nu$, so $|\nu| = n$.  We consider $\cP_0$ to have one element, the empty partition $\varnothing$, which has no parts, so $\ell(\varnothing) = |\varnothing| = 0$.  Let $\nu'$ denote the conjugate of the partition $\nu$.  We also write
$$ \nu = (1^{m_1(\nu)} 2^{m_2(\nu)} \cdots ), \quad \text{ where } \quad m_i(\nu) = \big| \{ j \in \intg_{\geq 1} \mid \nu_j = i \} \big|.$$ 
Define $n(\nu)$ by $n(\nu) = \sum_j (j-1) \nu_j$.  

We may also think of a partition $\nu$ as collection of points, where the part $\nu_i$ corresponds to the points $(i,1), (i,2), \ldots, (i, \nu_i)$.  We correspond to this collection of points a {\em Young diagram} for $\nu$, which is a collection of rows of boxes in the positions given by the coordinates of that row.  We take the upper-left corner as $(1,1)$, and increase the first coordinate moving to the right, and the second coordinate moving downwards.  We write $\Box \in \nu$ to denote a box of the Young diagram of $\nu$.  If $\Box \in \nu$ is in position $(i,j)$, then the {\em hook length} of $\Box$ in $\nu$, $h(\Box)$, is defined to be
$$ h(\Box) = \nu_i - \nu'_j - i - j + 1.$$

Now let $\cX$ be some collection of finite sets (which in the next section we will take to be orbits of the Frobenius map).  An {\em $\cX$-partition} (or a {\em multi-partition}) is a function $\blam: \cX \rightarrow \cP$, where for $X \in \cX$, we write $\blam(X) = \blam^{(X)}$.  Define
$$ \cP^{\cX}_n = \left\{ \blam: \cX \rightarrow \cP \; \mid \; \sum_{X \in \cX} |X| \, |\blam^{(X)}| = n \right\}, \text{ and } \cP^{\cX} = \bigcup_{n \geq 0} \cP^{\cX}_n.$$
If $\blam \in \cP_n^{\cX}$, we say that $\blam$ has {\em size} $n$, and write $|\blam| = n$.  For any $\blam \in \cP^{\cX}$, define the conjugate $\blam' \in \cP^{\cX}$ by $(\blam')^{(x)} = (\blam^{(x)})'$, and define $n(\blam)$ and $\ell(\blam)$ by
$$ n(\blam) = \sum_{X \in \cX} |X| n(\blam^{(X)}), \quad \text{ and } \quad \ell(\blam) = \sum_{X \in \cX} \ell(\blam^{(X)}).$$
If $\blam \in \cP^{\cX}$, we may think of the Young diagram for $\blam$ to be a list of the Young diagrams $\blam^{(x)}$, labelled by each $x \in \cX$.  Then we write $\Box \in \blam$ for a box in one of the Young diagrams $\blam^{(x)}$.  If $\Box \in \blam$ is in position $(i,j)$ of $\blam^{(x)}$, for $x \in \cX$, then we define the {\em hook length} of $\Box \in \blam$, written ${\bf h}(\Box)$, to be ${\bf h}(\Box) = |x| h(\Box)$, where $h(\Box)$ is the hook length of $\Box \in \blam^{(x)}$.

\subsection{Symmetric functions} \label{symmfuns}

Let $X = \{X_1, X_2, \ldots \}$ be an infinite set of indeterminates.  Let $\Lambda = \Lambda(X)$ denote the ring of symmetric functions in $X$, a definition for which we refer to \cite[Section I.2]{Mac}.  For any $\nu = (\nu_1, \nu_2, \ldots, \nu_{\ell})$, we let $p_{\nu}(X)$ denote the {\em power-sum symmetric function}, defined by
$$ p_{\nu}(X) = p_{\nu_1}(X) p_{\nu_2}(X) \cdots p_{\nu_{\ell}}(X), \quad \text{where} \quad p_m(X) = \sum_{i \geq 1} X_i^m.$$
For any $\lambda \in \cP$, we let $s_{\lambda}(X)$ denote the {\em Schur function} (see \cite[Section I.3]{Mac}), and for any $\mu \in \cP$ and indeterminate $t$, we let $P_{\mu}(X ; t)$ denote the {\em Hall-Littlewood function} (see \cite[Section III.2]{Mac}).  Recall that the sets $\{ p_{\nu}(X) \; \mid \; \nu \in \cP \}$ and $\{s_{\lambda} \; \mid \; \lambda \in \cP \}$ are $\cplx$-bases for $\Lambda$, and for any $t \in \cplx^{\times}$, the set $\{ P_{\mu}(X;t) \; \mid \; \mu \in \cP \}$ is also a $\cplx$-basis for $\Lambda$.

\section{The characters of finite unitary groups}  \label{Char}

\subsection{Orbits of Frobenius} \label{Orbits}

Consider the Frobenius map $F$, defined in Section \ref{UnitaryDefn}, acting on $\bG_1 = \fifibar^{\times}$.  For any $\alpha \in \fifibar^{\times}$, let $[\alpha]$ denote the $F$-orbit of $\alpha$, so
$$ [\alpha] = \{ \alpha, \alpha^{-q}, \alpha^{q^2}, \ldots, \alpha^{(-q)^{d-1}} \},$$
where if $|[\alpha]|=d$, then $\alpha^{(-q)^d} = \alpha$.  Let $\Phi$ denote the collection of the orbits of $F$ acting on $\fifibar^{\times}$:
$$ \Phi = \{ F\text{-orbits of } \fifibar^{\times} \}.$$
We will typically denote an element of $\Phi$ by $f$, as these $F$-orbits correspond to certain polynomials over $\fifisq$ (see \cite{En62, ThVi07}).

Now let $T_m$ denote the $F^m$-fixed points of $\fifibar^{\times}$, so that, for example,
$$ T_1 = \{ \alpha \in \fifisq^{\times} \; \mid \; \alpha^{q+1} = 1 \}, $$
and when $m=2l$ is even, we have $T_{2l} = \mathbb{F}_{q^{2l}}^{\times}$.  Note that we have
$$ \bigcup_{m \geq 1} T_m = \fifibar^{\times}, $$
so that we may view $\fifibar^{\times}$ as the direct limit, 
$$ \bG_1 = \lim_{\rightarrow} T_m.$$
We also have norm maps, $N_{m,r}$, whenever $r|m$,
\begin{equation} \label{norm}
\begin{array}{rccl} N_{m,r}:& T_m & \longrightarrow & T_r\\ & \alpha & \mapsto & \prod_{i=0}^{(m/r) - 1} \alpha^{(-q)^{ri}}\end{array},\quad \text{where $m,r\in \intg_{\geq 1}$, $r\mid m$}.
\end{equation}

Now consider, for each $m \geq 1$, the group of characters $\widehat{T}_m$,
$$ \widehat{T}_m = \{ \xi: T_m \rightarrow \cplx^{\times} \; \mid \; \xi \text{ a multiplicative homomorphism} \}.$$
Whenever $r|m$, we may use the transpose of the norm map $N_{m,r}$, which we denote $N_{m,r}^{\star}$ to embed $\widehat{T}_r$ into $\widehat{T}_m$, as follows:
\begin{equation} \label{normstar}
\begin{array}{cccc}  N_{m,r}^{\star} :& \widehat{T}_r & \longrightarrow & \widehat{T}_m \\ & \xi & \mapsto & \xi \circ N_{m,r} \end{array}
\end{equation}
Now, we may consider the direct limit of the groups $\widehat{T}_m$ with respect to the maps $N_{m,r}^{\star}$, which we denote by $L$:
$$ L = \lim_{\rightarrow} \widehat{T}_m.$$
The Frobenius map $F$ acts naturally on each $\widehat{T}_m$, and so acts on their direct limit $L$.  Let $\Theta$ denote the collection of $F$-orbits on $L$:
$$ \Theta = \{ F\text{-orbits of } L\}.$$  
If $\xi \in L$, we let $[\xi]$ denote the $F$-orbit of $\xi$, so $[\xi] \in \Theta$.  

Note that we can also consider the inverse limit of the groups $T_m$ with respect to the norm maps $N_{m,r}$, which we denote by $T$:
$$T = \lim_{\leftarrow} T_m.$$
Then, the group of characters $\widehat{T}$ of $T$, is isomorphic to $L$.  If we consider the set $L_m$ of $F^m$-fixed elements of $L$, then we may identify it exactly with $\widehat{T}_m$.  With this identification, we may pair elements of $L$ with elements of $\fifibar^{\times}$, since
$$ \fifibar^{\times} = \bigcup_m T_m \quad \text{ and } \quad L = \bigcup_m L_m,$$
and $L_m$ is identified with $\widehat{T}_m$.  Given $\xi \in L$ and $a \in \fifibar^{\times}$, if $\xi \in L_m$ and $a \in T_m$, then we may consider the natural pairing $\xi(a)$, which we also denote by $\xi(a)_m$ to emphasize the location of the pairing.  If we have $\xi \in L_r$, and $r|m$, then we may embed $L_r$ in $L_m$ through the map $N_{m,r}^{\star}$.  In this case, if $a \in T_r$, then also $a \in T_m$, and we may take the pairing of $\xi$ with $a$ taking either $\xi \in L_r$ or $\xi \in L_m$.  Then, from the definitions of the maps $N_{m,r}$ in Equation (\ref{norm}) and $N_{m,r}^{\star}$ in Equation (\ref{normstar}), we have
\begin{equation} \label{pairing}
\xi(a)_m= \xi(N_{m,r}(a))_r = \xi(a)_r^{m/r}.
\end{equation}

When $q$ is odd, define the character $\sigma$ of $T_1$ by $\sigma(\beta) = -1$, where $\beta$ is a multiplicative generator of $T_1$, and also let $\sigma$ denote the corresponding element of $L$.  Note that the definition of $\sigma$ is independent of the choice of generator of $T_1$.  Also note that for any $m \geq 1$, if $\tilde{\beta}$ is a multiplicative generator of $T_m$, then we have $\sigma(\tilde{\beta})_m = -1$, since $N_{m,1}(\tilde{\beta})$ is a multiplicative generator for $T_1$.  

We will let ${\bf 1}$ denote the trivial character of any finite group, or of the element of $L$ corresponding to the trivial character.
\\
\\
\noindent {\bf Remark. }  The set $\Theta$ which we describe above was defined incorrectly in \cite{ThVi07}, where we erroneously identified $\fifibar^{\times}$ with the {\em inverse} limit of the $T_m$'s, rather than the {\em direct} limit, as above.  This error was carried over in \cite{GoVi08}.  This error is only cosmetic, however, in that it does not affect any of the results obtained in \cite{ThVi07, GoVi08}.  The description of $\Theta$ we give here should replace the flawed description given in \cite{ThVi07} and \cite{GoVi08}.

\subsection{Conjugacy classes} \label{Conj}

Recall the sets of $F$-orbits, $\Phi$ and $\Theta$, defined in Section \ref{Orbits}.  We consider $\cX$-partitions, where $\cX = \Phi$ or $\cX = \Theta$.  When $\cX = \Phi$, we have the following, which was proven in \cite{En62, Wa62}.

\begin{thm}[Ennola, Wall] \label{conj}
The conjugacy classes of the group $\U(n, \fifisq)$ are parametrized by $\cP_n^{\Phi}$, the $\Phi$-partitions of size $n$.
\end{thm}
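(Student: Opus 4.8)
The plan is to derive the parametrization from the familiar description of the conjugacy classes of $\bG_n = \GL(n,\fifibar)$ by passing to $F$-fixed points. Recall that the $\bG_n$-conjugacy class of $g \in \bG_n$ is completely determined by its Jordan canonical form, that is, by the finitely supported function $\mu_g\colon\fifibar^{\times}\to\cP$ sending $\alpha$ to the partition whose parts are the sizes of the Jordan blocks of $g$ with eigenvalue $\alpha$; here $\sum_{\alpha}|\mu_g(\alpha)|=n$. Thus the $\bG_n$-classes are in bijection with such functions $\mu$, and the goal is to match up the $U_n$-classes with the $F$-stable ones. Since $F$ is a Steinberg endomorphism of $\bG_n$ (indeed $F^2$ is the ordinary $q^2$-power Frobenius) and $\bG_n^F = U_n$, the Lang--Steinberg theorem is available for this.

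The crucial structural fact, special to type $A$, is that $C_{\bG_n}(g)$ is connected for \emph{every} $g \in \bG_n$: it equals $\{x : xg = gx,\ \det x \neq 0\}$, a nonempty Zariski-open subset of the linear space of matrices commuting with $g$, hence irreducible. When $g \in U_n$ this centralizer is also $F$-stable, so $F$ restricts to a Steinberg endomorphism of the connected group $C_{\bG_n}(g)$.

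I would then prove that the $U_n$-classes correspond bijectively to the $F$-stable $\bG_n$-classes in two steps. If $g \in U_n$, its $\bG_n$-class $C$ clearly satisfies $F(C)=C$; conversely, Lang--Steinberg for the connected group $\bG_n$ shows that every $F$-stable class contains an $F$-fixed point, hence meets $U_n$. For injectivity, suppose $g, g' \in U_n$ are $\bG_n$-conjugate, say $g' = xgx^{-1}$. Then $F(x)gF(x)^{-1} = F(g') = g' = xgx^{-1}$ forces $x^{-1}F(x) \in C_{\bG_n}(g)$; applying Lang's theorem to the connected group $C_{\bG_n}(g)$ gives $c$ in this centralizer with $F(c)c^{-1} = x^{-1}F(x)$, whence $y := xc \in U_n$ and $ygy^{-1} = g'$. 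So a single $F$-stable $\bG_n$-class meets $U_n$ in exactly one $U_n$-class.

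Finally I would read off the combinatorics. Since $F$ carries a matrix with eigenvalue $\alpha$ to one with eigenvalue $\alpha^{-q}$ while preserving Jordan type (raise entries to the $q$th power, then transpose and invert), the induced action of $F$ on the set of functions $\mu$ is exactly the one coming from the Frobenius action $\alpha\mapsto\alpha^{-q}$ on $\fifibar^{\times}$ of Section \ref{Orbits}. Hence an $F$-stable $\bG_n$-class corresponds to a function $\mu$ that is constant on each $F$-orbit $f\in\Phi$, that is, to a function $\blam\colon\Phi\to\cP$; and the condition $\sum_{\alpha}|\mu(\alpha)| = n$ translates into $\sum_{f\in\Phi}|f|\,|\blam(f)| = n$, i.e.\ $\blam\in\cP_n^{\Phi}$. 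This gives the asserted bijection. The step carrying the real weight is the descent from $\bG_n$ to $U_n$ --- both that $F$-stable classes meet $U_n$ and that they do not split further --- which rests on Lang's theorem together with the connectedness of $\bG_n$ and of all its centralizers (again a type-$A$ phenomenon); alternatively one can avoid algebraic groups entirely and classify $\fifisq[t,t^{-1}]$-modules equipped with a nondegenerate sesquilinear form in the style of Ennola and Wall, at the price of a noticeably longer argument.
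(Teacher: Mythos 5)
Your argument is essentially correct, but be aware that the paper itself contains no proof of this statement: Theorem \ref{conj} is quoted as a classical result, with the proofs residing in Ennola's and Wall's papers, where the classes of $\U(n,\fifisq)$ are classified directly in terms of hermitian forms on modules over $\mathbb{F}_{q^2}[t,t^{-1}]$ (the ``longer argument'' you mention at the end). Your route is the standard modern alternative: classes of $\bG_n=\GL(n,\fifibar)$ are Jordan data $\mu\colon\fifibar^{\times}\to\cP$; every centralizer in $\bG_n$ is connected (being a nonempty open subset of the commuting algebra, a type-$A$ fact); hence by Lang--Steinberg every $F$-stable $\bG_n$-class meets $U_n$ and does not split, and $F$-stability of a class amounts to $\mu$ being constant on the orbits of $\alpha\mapsto\alpha^{-q}$, giving exactly $\cP_n^{\Phi}$ with the weighted size condition $\sum_{f\in\Phi}|f|\,|\blam^{(f)}|=n$. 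What your approach buys is brevity and conceptual clarity, and it generalizes to any group where the relevant component groups of centralizers can be controlled; what Ennola--Wall's computation buys, and what this paper actually needs downstream, is the explicit form of the correspondence (characteristic polynomials of class representatives) and the centralizer orders $a_{\bmu}$ of Theorem \ref{cent}, which do not fall out of the Lang--Steinberg count. One small slip in your non-splitting step: to make $y=xc$ lie in $U_n$ you need $c\,F(c)^{-1}=x^{-1}F(x)$, not $F(c)\,c^{-1}=x^{-1}F(x)$; this is harmless since the map $c\mapsto c\,F(c)^{-1}$ on the connected centralizer is also surjective, but the condition should be stated in that form. With that adjustment, and the observation that $F$ is indeed a Steinberg endomorphism (as you note, $F^2$ is the split $q^2$-Frobenius), your sketch is a complete and valid proof.
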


If $\bmu \in \cP_n^{\Phi}$, we let $K^{\bmu}$ denote the conjugacy class of $\U(n,\fifisq)$ which corresponds to $\bmu$ by Theorem \ref{conj}.  We note that, in particular an element in the conjugacy class $K^{\bmu}$ has characteristic polynomial 
$$ \prod_{f \in \Phi} \prod_{\alpha \in f} (x-\alpha)^{|\bmu^{(f)}|},$$
(see \cite{En62,Wa62} for the precise correspondence).  For example, if $z = \alpha I$, $\alpha \in T_1$, is a central element of $\U(n, \fifisq)$, then $z \in K^{\bmu}$, where $\bmu^{(\{ \alpha \})} = (1^n)$, and $\bmu^{(f)} = \varnothing$ if $f \neq \{ \alpha \}$.  We also have a precise description of the orders of the centralizers of elements in each conjugacy class of $\U(n, \fifisq)$, as in the following.

\begin{thm}[Ennola, Wall] \label{cent}
Let $g \in \U(m, \fifisq)$ be in the conjugacy class $K^{\bmu}$, for $\bmu \in \cP^{\Phi}_m$.  Then the order of the centralizer of $g$ in $\U(m, \fifisq)$ is
$$ a_{\bmu} = (-1)^{|\bmu|} \prod_{f \in \Phi} a_{\bmu^{(f)}}( (-q)^{|f|} ), \text{ where } a_{\mu}(x) = x^{|\mu| + 2n(\mu)} \prod_i \prod_{j=1}^{m_i} ( 1 - (1/x)^j),$$
for $\mu = (1^{m_1} 2^{m_2} 3^{m_3} \cdots ) \in \cP$.
\end{thm}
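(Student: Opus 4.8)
My plan is to pass to the ambient algebraic group $\bG = \GL(m,\fifibar)$, so that $U_m = \bG^F$ with $F$ the Frobenius of Section~\ref{UnitaryDefn}, and to apply the Lang--Steinberg theorem. For $g \in \bG^F$ the centralizer $\bC := C_{\bG}(g)$ is $F$-stable, and it is connected: making $V = \fifibar^m$ into an $\fifibar[t]$-module with $t$ acting by $g$, we have $\bC = \mathrm{Aut}_{\fifibar[t]}(V)$, the unit group of $\mathrm{End}_{\fifibar[t]}(V)$, which over $\fifibar$ is a product of full matrix algebras. Hence by Lang--Steinberg $|C_{U_m}(g)| = |\bC^F|$. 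To evaluate this I decompose $V = \bigoplus_{\alpha \in \fifibar^{\times}} V_{\alpha}$ into generalized eigenspaces of $g$, so $\bC = \prod_{\alpha} \mathrm{Aut}_{\fifibar[t]}(V_{\alpha})$; since $g$ is $F$-fixed, $F$ permutes these factors by $\alpha \mapsto \alpha^{-q}$, exactly as it acts on $\bG_1 = \fifibar^{\times}$ in Section~\ref{Orbits}. Thus the eigenvalues of $g$ form a union of $F$-orbits $f \in \Phi$, the Jordan type of $g$ on $V_{\alpha}$ depends only on the orbit $f \ni \alpha$ (it is $\bmu^{(f)}$, and $\dim V_{\alpha} = |\bmu^{(f)}|$; this is the source of the indexing in Theorem~\ref{conj}), and $\bC = \prod_{f \in \Phi} \bC_f$ with $\bC_f = \prod_{\alpha \in f} \mathrm{Aut}_{\fifibar[t]}(V_{\alpha})$ an $F$-stable factor on whose $|f|$ mutually isomorphic coordinates $F$ acts cyclically. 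Projecting an $F$-fixed point of $\bC_f$ to one coordinate identifies $\bC_f^F$ with $G_{\bmu^{(f)}}^{\,F^{|f|}}$, where $G_{\mu}$ is the connected centralizer of a fixed Jordan pattern of type $\mu$ in $\GL_{|\mu|}(\fifibar)$ and $F^{|f|}$ restricts to a Frobenius on it. Therefore $|C_{U_m}(g)| = \prod_{f \in \Phi} |G_{\bmu^{(f)}}^{\,F^{|f|}}|$, and everything reduces to computing $|G_{\mu}^{\,F^d}|$ for $d = |f|$.

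Since $F^2$ is the split $q^2$-power Frobenius on $\bG$, the restriction of $F^d$ to $G_{\mu}$ is, with $Q = q^d$, a split Frobenius of parameter $Q$ when $d$ is even and a unitary-type Frobenius of parameter $Q$ when $d$ is odd. I would compute $|G_{\mu}^{\,F^d}|$ from the Levi decomposition $G_{\mu} = U_{\mu} \rtimes L_{\mu}$, where $L_{\mu} \cong \prod_i \GL_{m_i(\mu)}(\fifibar)$ and $U_{\mu}$ is the unipotent radical, of dimension $\dim U_{\mu} = |\mu| + 2n(\mu) - \sum_i m_i(\mu)^2$, both $F^d$-stable. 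Then $|U_{\mu}^{\,F^d}| = Q^{\dim U_{\mu}}$, while $|L_{\mu}^{\,F^d}|$ equals $\prod_i a_{(1^{m_i})}(Q)$ in the even case and $\prod_i (-1)^{m_i} a_{(1^{m_i})}(-Q)$ in the odd case; here I use that $a_{(1^k)}$ is the order polynomial of $\GL_k$, so $|\GL_k|$ over the field of $Q$ elements is $a_{(1^k)}(Q)$, and that the corresponding unitary group over the field of $Q^2$ elements has order $(-1)^k a_{(1^k)}(-Q)$. With the identity $a_{\mu}(Q) = Q^{\dim U_{\mu}} \prod_i a_{(1^{m_i})}(Q)$, which is immediate from the product formula for $a_{\mu}$, the even case gives $|G_{\mu}^{\,F^d}| = a_{\mu}(Q) = a_{\mu}((-q)^d)$, and the odd case gives $|G_{\mu}^{\,F^d}| = (-1)^{\ell(\mu) + \dim U_{\mu}} a_{\mu}(-Q) = (-1)^{\ell(\mu) + \dim U_{\mu}} a_{\mu}((-q)^d)$; since $\dim U_{\mu} \equiv |\mu| - \ell(\mu) \pmod 2$ (because $m_i^2 \equiv m_i$ and $2n(\mu) \equiv 0$), this sign is $(-1)^{|\mu|}$. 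Combining the two parities, $|G_{\bmu^{(f)}}^{\,F^{|f|}}| = (-1)^{|f|\,|\bmu^{(f)}|} a_{\bmu^{(f)}}((-q)^{|f|})$; multiplying over $f \in \Phi$ and using $|\bmu| = \sum_f |f|\,|\bmu^{(f)}|$ gives $|C_{U_m}(g)| = (-1)^{|\bmu|} \prod_{f} a_{\bmu^{(f)}}((-q)^{|f|}) = a_{\bmu}$, as claimed.

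The step I expect to be the main obstacle is the bookkeeping around $F$: pinning down precisely how $F$ permutes the primary factors of $\bC$, verifying that $F^{|f|}$ restricts to a split or unitary-type Frobenius on a single factor according to the parity of $|f|$, identifying the Levi of $G_{\mu}$ with $\prod_i \GL_{m_i}$ together with the dimension of $U_{\mu}$, and then arranging the Ennola sign so that the orbitwise factors $(-1)^{|f|\,|\bmu^{(f)}|}$ collapse to $(-1)^{|\bmu|}$. An alternative, essentially Wall's original argument and avoiding $\fifibar$: the isotypic components $V_f$ of the $\fifisq[t]$-module $V$ are mutually orthogonal and nondegenerate for the Hermitian form preserved by $U_m$ (since the form-adjoint of $g$ is $g^{-1}$), the centralizer of $g$ in $U_m$ is the product over $f$ of the isometry groups of induced Hermitian modules over the finite local rings $\fifisq[t]/(p_f^N)$ with the involution induced by $t \mapsto t^{-1}$, and these unitary groups over finite local rings are counted by filtering by powers of the maximal ideal down to the residue-field case; this yields the same product formula at the cost of more care with the ring-theoretic combinatorics.
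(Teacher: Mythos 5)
This theorem is not proved in the paper at all: it is imported from Ennola and Wall (\cite{En62, Wa62}), so there is no internal argument to compare yours against. On its own terms, your sketch is a correct outline of the standard modern proof, and it is essentially the algebraic-group counterpart of Wall's original computation (which is the Hermitian-modules-over-local-rings route you mention as the alternative). The substantive steps all check out: the primary decomposition $V=\bigoplus_\alpha V_\alpha$ with $F$ permuting eigenvalues by $\alpha\mapsto\alpha^{-q}$, the collapse of each orbit factor to the $F^{|f|}$-fixed points of a single coordinate, the identification $\dim C = |\mu|+2n(\mu)$, $\dim U_\mu=|\mu|+2n(\mu)-\sum_i m_i^2$, the identity $a_\mu(Q)=Q^{\dim U_\mu}\prod_i a_{(1^{m_i})}(Q)$, and the parity computation $\dim U_\mu\equiv|\mu|-\ell(\mu)\pmod 2$ that makes the Ennola signs collapse to $(-1)^{|\bmu|}$; I verified these and they give exactly $a_{\bmu}$. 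Two small points of precision. First, the equality $C_{U_m}(g)=C_{\bG}(g)^F$ is a tautology and does not need Lang--Steinberg; where connectedness and Lang--Steinberg genuinely enter is (i) in knowing that the $\bG$-class of $g$ meets $\bG^F$ in a single $U_m$-class (so that the parametrization by $\bmu$ in Theorem \ref{conj} is well posed and the centralizer order depends only on $\bmu$), (ii) in arguing that the restriction of $F^{|f|}$ to a factor, which a priori is only some Steinberg endomorphism twisting $\GL_{|\bmu^{(f)}|}$ by transpose-inverse an odd or even number of times and by an inner automorphism, has the same fixed-point count as the standard split or unitary Frobenius of parameter $q^{|f|}$, and (iii) in splitting $G_\mu^{F^{|f|}}=U_\mu^{F^{|f|}}\rtimes L_\mu^{F^{|f|}}$ via the vanishing of $H^1(F,U_\mu)$. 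Second, you should note that the Frobenius does not permute the $\GL_{m_i}$ factors of the Levi because it preserves Jordan block sizes, and that the "single eigenvalue" element on $V_\alpha$ is central times unipotent, so its centralizer is that of the unipotent part. These are exactly the bookkeeping items you flagged, and none of them is an obstruction; filling them in yields a complete proof, whereas the paper simply cites the result.
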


We now consider the set $\cP_n^{\Theta}$, and we first describe a bijection between the sets $\cP_n^{\Theta}$ and $\cP_n^{\Phi}$ which will be used later.  Let $\blam \in \cP_n^{\Theta}$.  Then for each $\varphi \in \Theta$ such that $\blam^{(\varphi)} \neq \varnothing$, we must have $|\varphi| \leq n$.  Since an element $\xi \in \varphi$ satisfies $\xi \in \widehat{T}_{|\varphi|}$, and $\widehat{T}_r$ may be embedded in $\widehat{T}_m$ whenever $r|m$, as described in Section \ref{Orbits}, we may assume each $\varphi \in \Theta$ such that $\blam^{(\varphi)} \neq \varnothing$ satisfies $\varphi \subset \widehat{T}_{n!}$.  Similarly, if $\bmu \in \cP_n^{\Phi}$, then we may assume that each orbit $f \in \Phi$ such that $\bmu^{(f)} \neq \varnothing$ satisfies $f \subset T_{n!}$.

Both $T_{n!}$ and $\widehat{T}_{n!}$ are cyclic groups of order $q^{n!} - (-1)^{n!}$.  Let $\alpha$ be a multiplicative generator for $T_{n!}$, and define $\xi \in \widehat{T}_{n!}$ by letting $\xi(\alpha) = \zeta$, where $\zeta$ is a primitive $(n!)$-th root of unity.  Then, the map $\partial: T_{n!} \rightarrow \widehat{T}_{n!}$, defined by $\partial(\alpha) = \xi$, is an isomorphism, and the Frobenius map $F$ commutes with $\partial$.  So, $\partial$ may be extended to a map of $F$-orbits of $T_{n!}$ to $F$-orbits of $\widehat{T}_{n!}$, which we call $\tilde{\partial}$.  That is, for any $a \in T_{n!}$, we have $\tilde{\partial}([a]) = [\partial(a)]$.  Finally, we may use $\tilde{\partial}$ to define a bijection $\Delta: \cP_n^{\Theta} \rightarrow \cP_n^{\Phi}$, as follows.  For each $\blam \in \cP_n^{\Theta}$, define
\begin{equation} \label{bijection}
\Delta(\blam) = \bmu, \text{ where } \bmu^{([a])} = \blam^{(\tilde{\partial}([a]))} \text{ for each } a \in T_{n!}.
\end{equation}
It follows directly from the construction of $\Delta$ that it is a bijection.  We will see that in fact the set $\cP_n^{\Theta}$ naturally parametrizes the set of irreducible characters of the group $\U(n,\fifisq)$.

\subsection{The character ring and Deligne-Lusztig characters} \label{CharRing}

Let $C_n$ be the ring of $\cplx$-valued class functions of $\U(n,\fifisq)$, and let $C = \oplus_{n \geq 1} C_n$.  If $\psi_1 \in C_i$, $\psi_2 \in C_j$, for some $i, j \geq 1$, let $\psi_1 \odot \psi_2$ denote the class function obtained by Deligne-Lusztig induction (see \cite{Ca85, dellusz, dmbook} for a definition), so that
$$ \psi_1 \odot \psi_2 = R_{U_i \times U_j}^{U_{i+j}}(\psi_1 \otimes \psi_2) \in C_{i+j}.$$
Now, $C$ is a graded $\cplx$-algebra, where $\odot$ is the graded product (see \cite{ThVi07}).  We may equip each $C_n$ with the usual inner product of class functions of a finite group, denoted by $\langle \cdot, \cdot \rangle$, which can be extended to $C$ by defining $C_i$ and $C_j$ to be mutually orthogonal when $i \neq j$.

For each $\bmu \in \cP_n^{\Phi}$, $n \geq 1$, let $\kappa^{\bmu}$ denote the indicator class function for the conjugacy class $K^{\bmu}$ of $U_n$.  Note that the set $\{ \kappa^{\bmu} \mid \bmu \in \cP^{\Phi} \}$ is a $\cplx$-linear basis for $C$.

The maximal tori of $U_n$ are isomorphic to $T_{\nu} = T_{\nu_1} \times \cdots \times T_{\nu_{\ell}}$, as $\nu = (\nu_1, \ldots, \nu_{\ell})$ ranges over partitions $\nu \in \cP_n$.  If $T_{\nu}$ is a maximal torus of $U_n$, and $\theta: T_{\nu} \rightarrow \cplx^{\times}$ is a linear character of $T_{\nu}$, we let $R_{T_{\nu}, \theta}$ denote the {\em Deligne-Lusztig} virtual character of $U_n$ corresponding to the pair $(T_{\nu}, \theta)$ (see \cite{Ca85, dellusz, dmbook}).  Given a pair $(T_{\nu}, \theta)$, we have $\theta = \theta_1 \otimes \theta_2 \otimes \cdots \otimes \theta_{\ell}$, where $\theta_i \in \widehat{T}_{\nu_i}$.  We associate to the pair $(T_{\nu}, \theta)$ an element $\btau_{\nu, \theta} \in \cP_n^{\Theta}$ by defining, for each $\varphi \in \Theta$,
\begin{equation} \label{DLcorresp}
\btau_{\nu, \theta}^{(\varphi)} = (\nu_{i_1}/|\varphi|, \nu_{i_2}/|\varphi|, \ldots) \quad \text{such that} \quad \theta_{i_1}, \theta_{i_2}, \ldots \in \varphi.
\end{equation}
It follows from \cite{LuSr77} that the set $\{R_{T_{\nu}, \theta} \mid \btau_{\nu, \theta} \in \cP^{\Theta} \}$ is another $\cplx$-linear basis for $C$.

\subsection{The characteristic map}  \label{CharMap}

In this section, we give a description of the irreducible characters of $\U(n,\fifisq)$ using symmetric functions, following the development in \cite{ThVi07}.  

For each $f \in \Phi$ and each $\varphi \in \Theta$, define infinite sets of indeterminates 
$$X^{(f)} = \{X_i^{(f)} \; \mid \; i \geq 1 \} \; \text{ and } \; Y^{(\varphi)} = \{Y_i^{(\varphi)} \; \mid \; i \geq 1 \}.$$ 
We consider symmetric functions in the $X$ and $Y$ variables, and we relate symmetric functions in these variables through the following transform:
\begin{equation} \label{transform}
p_n(Y^{(\varphi)}) = (-1)^{n|\varphi| -1} \sum_{\alpha \in T_{n|\varphi|}} \xi(\alpha) p_{n|\varphi|/|f_{\alpha}|}(X^{(f_{\alpha})}),
\end{equation}
where $\alpha \in f_{\alpha}$, $\xi \in \varphi$, the pairing $\xi(\alpha)$ is taken with $\xi \in \widehat{T}_{n|\varphi|}$, and the power-sum symmetric function $p_z$ is taken to be $0$ if $z$ is not an integer.

For any $\bmu \in \cP^{\Phi}$, define a generalized version of the Hall-Littlewood symmetric function $P_{\bmu}$ in the $X$ variables by
$$ P_{\bmu} = q^{-n(\bmu)} \prod_{f \in \Phi} P_{\bmu^{(f)}} (X^{(f)}; q^{-|f|}).$$
We now define a ring of symmetric functions $\Lambda$ by
$$ \Lambda = \bigoplus_n \Lambda_n \quad \text{where} \quad \Lambda_n = \cplx \spanning \{P_{\bmu} \; \mid \; \bmu \in \cP^{\Phi}_n \}.$$
For any $\blam, \bnu \in \cP^{\Theta}$, we define a generalized Schur function $s_{\blam}$ and a power-sum function in the $Y$ variables by
$$ s_{\blam} = \prod_{\varphi \in \Theta} s_{\blam^{(\varphi)}}(Y^{(\varphi)}) \quad {\rm and } \quad p_{\bnu} = \prod_{\varphi \in \Theta} p_{\bnu^{(\varphi)}}(Y^{(\varphi)}).$$
It follows from Equation (\ref{transform}), and the results recalled in Section \ref{symmfuns} that we also have
$$ \Lambda = \bigoplus_n \cplx \spanning \{s_{\blam} \; \mid \; \blam \in \cP^{\Theta}_n \} = \bigoplus_n \cplx \spanning \{p_{\bnu} \; \mid \; \bnu \in \cP^{\Theta}_n \}.$$
Now, $\Lambda$ is a graded $\cplx$-algebra, with graded multiplication given by normal multiplication of symmetric functions, and the grading given by the degree of a symmetric function, except by weighting the variables in $X^{(f)}$ by $|f|$ and the variables in $Y^{(\varphi)}$ by $|\varphi|$.  That is, each element of $\Lambda_n$ has grading $n$.

We define a Hermitian inner product on $\Lambda$ by defining, for any $\bmu, \bnu \in \cP^{\Phi}$,
\begin{equation} \label{inner} 
\langle P_{\bmu}, P_{\bnu} \rangle = \delta_{\bmu \bnu} a_{\bmu}^{-1}, 
\end{equation}
and extending linearly, where $a_{\bmu}$ is the size of the centralizer of the conjugacy class $K_{\bmu}$ in $U_{\bmu}$, as given in Theorem \ref{cent}.  Now, both $C$ and $\Lambda$ are graded $\cplx$-algebras with Hermitian inner products.  For a proof of the following, see \cite{ThVi07} and the references listed there.

\begin{thm} \label{Characters}
Define a map $\ch: C \rightarrow \Lambda$ by letting $\ch(\kappa_{\bmu}) = P_{\bmu}$ for each $\bmu \in \cP^{\Phi}$, and extending linearly.  Then $\ch$ is an isometric isomorphism of graded $\cplx$-algebras.

For any $\blam \in \cP^{\Theta}$, define $\chi^{\blam} \in C$ by
$$ \chi^{\blam} = \ch^{-1}\left((-1)^{\lfloor |\blam|/2 \rfloor + n(\blam)} s_{\blam}\right).$$
Then, the set $\{ \chi^{\blam} \mid \blam \in \cP_m^{\Theta} \}$ is the set of irreducible characters of $\U(m, \fifisq)$. 

For any $\bnu \in \cP^{\Theta}$, define $R_{\bnu} \in C$ by
$$ R_{\bnu} = \ch^{-1} \left((-1)^{|\bnu| + \ell(\bnu)} p_{\bnu} \right).$$
Then, the set $\{ R_{\bnu} \mid \bnu \in \cP_n^{\Theta} \}$ is the set of Deligne-Lusztig virtual characters of $\U(n, \fifisq)$, and for each $\bnu \in \cP^{\Theta}$, $R_{\bnu} = R_{T_{\eta}, \theta}$ where $\bnu = \btau_{\eta, \theta}$.
\end{thm}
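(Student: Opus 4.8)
This statement assembles the core of the characteristic-map formalism for $\U(n,\fifisq)$, and my plan is to follow the development of \cite{ThVi07}, which is the Ennola dual of Green's theory for $\GL(n,\fifi)$. First I would dispose of the easy structure. By Theorem \ref{conj} the sets $\{\kappa_{\bmu} : \bmu \in \cP^{\Phi}_n\}$ and $\{P_{\bmu} : \bmu \in \cP^{\Phi}_n\}$ are $\cplx$-bases of $C_n$ and $\Lambda_n$ respectively, and the weighting of the $X^{(f)}$-variables by $|f|$ makes each $P_{\bmu}$ homogeneous of degree $|\bmu|$; hence $\ch$ is a bijective graded $\cplx$-linear map. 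Since $\kappa_{\bmu}$ is the indicator function of $K^{\bmu}$ in $U_n$ (with $n = |\bmu|$), one has $\langle \kappa_{\bmu},\kappa_{\bnu}\rangle = \delta_{\bmu\bnu}\,|K^{\bmu}|/|U_n| = \delta_{\bmu\bnu}\,a_{\bmu}^{-1}$ by Theorem \ref{cent}, so $\ch$ is an isometry by the defining relation (\ref{inner}).

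The substantive point is that $\ch$ is multiplicative, i.e.\ $\ch(\psi_1 \odot \psi_2) = \ch(\psi_1)\,\ch(\psi_2)$. By $\cplx$-bilinearity it suffices to compute $\kappa_{\bmu} \odot \kappa_{\bnu}$, that is, to decompose the Deligne-Lusztig induction $R_{U_i \times U_j}^{U_{i+j}}(\kappa_{\bmu}\otimes\kappa_{\bnu})$ in the basis $\{\kappa_{\btau}\}$. The claim is that the resulting structure constants are, after replacing $q^{|f|}$ by $(-q)^{|f|}$ on each Frobenius orbit, the Hall polynomials $g^{\btau}_{\bmu\bnu}$; this is the precise form of Ennola duality, and is proven either by a direct Mackey/character-formula computation of Deligne-Lusztig induction applied to class functions, or by noting that the relevant counts are polynomial in $q$ and specializing from the split case. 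On the symmetric-function side the Hall-Littlewood functions $P_{\mu}(X;t)$ multiply with exactly the Hall polynomials in $t$; with the normalization $P_{\bmu} = q^{-n(\bmu)}\prod_{f\in\Phi} P_{\bmu^{(f)}}(X^{(f)};q^{-|f|})$ built into $\Lambda$, the two products agree, which gives that $\ch$ is an isomorphism of graded $\cplx$-algebras.

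Granting this, $\ch^{-1}$ sends orthonormal bases of $\Lambda$ to orthonormal bases of $C$. One checks from the transform (\ref{transform}) that the generalized Schur functions $s_{\blam}$ form an orthonormal basis of $\Lambda$, so each $\ch^{-1}(s_{\blam})$ is $\pm 1$ times an irreducible character of $\U(|\blam|,\fifisq)$. To fix the sign as $(-1)^{\lfloor |\blam|/2\rfloor + n(\blam)}$ I would invoke the known parametrization of the irreducible characters: when $\blam$ is supported on the orbit of ${\bf 1}$ these are the unipotent characters, whose signs relative to the Schur/Hall-Littlewood data are classical; the general case reduces, via the Jordan decomposition of characters (Lusztig-Srinivasan), to Deligne-Lusztig twists of products of unipotent characters of smaller unitary groups, and tracking the sign appearing in Deligne-Lusztig theory yields exactly $(-1)^{\lfloor |\blam|/2\rfloor + n(\blam)}$. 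Likewise the $p_{\bnu}$ correspond under (\ref{transform}) to the characteristic functions of the maximal tori $T_{\eta}$ twisted by linear characters $\theta$, so $\ch^{-1}(p_{\bnu}) = \pm R_{T_{\eta},\theta}$ with $\bnu = \btau_{\eta,\theta}$ as in (\ref{DLcorresp}); comparing with the Deligne-Lusztig character formula (Ennola-dual to the $\GL$ computation) produces the normalization $(-1)^{|\bnu| + \ell(\bnu)}$.

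The genuine obstacle is the multiplicativity step together with the accompanying sign bookkeeping: making Ennola duality precise enough to identify the structure constants of $\odot$ on indicator functions with Hall polynomials specialized at $-q$, and then reconciling the three sign conventions (on $\ch$, on $\chi^{\blam}$, and on $R_{\bnu}$) so that the $\chi^{\blam}$ emerge as genuine characters rather than their negatives. The linear-algebra and isometry assertions, by contrast, are immediate from the definitions and from Theorems \ref{conj} and \ref{cent}.
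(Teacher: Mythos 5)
A point of orientation first: the paper does not prove Theorem \ref{Characters} at all; it is a summary of results quoted from \cite{ThVi07} (``For a proof of the following, see \cite{ThVi07} and the references listed there''). So your proposal is not an alternative to an argument in this paper, but an outline of the argument in the cited source, and as such it identifies the right ingredients: the linear, graded, and isometric structure of $\ch$ from Theorems \ref{conj} and \ref{cent} together with (\ref{inner}); multiplicativity via the identification of the structure constants of Deligne--Lusztig induction on the $\kappa_{\bmu}$ with Hall polynomials evaluated at $(-q)^{|f|}$ (this is Ennola duality, which in \cite{ThVi07} rests on Kawanaka's work \cite{Ka85} on Green functions and generalized Gelfand--Graev representations rather than on a naive ``polynomial in $q$, specialize from the split case'' argument, so treat that alternative with caution); and the transform (\ref{transform}) linking the $Y$- and $X$-variables, through which the $p_{\bnu}$ are matched with the $R_{T_\eta,\theta}$.

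One step, as you wrote it, is not valid: from the isometry of $\ch$ and the orthonormality of $\{s_{\blam}\}$ you conclude that each $\ch^{-1}(s_{\blam})$ is $\pm 1$ times an irreducible character. Orthonormality in the space of class functions does not give this (for instance $(\chi_1+\chi_2)/\sqrt{2}$ and $(\chi_1-\chi_2)/\sqrt{2}$ are orthonormal); one also needs $\ch^{-1}(s_{\blam})$ to be a \emph{virtual} character, i.e.\ an integral combination of irreducibles, and that is not automatic, since $s_{\blam}$ expands in the $p_{\bnu}$ only with rational coefficients. The theorem of Lusztig and Srinivasan \cite{LuSr77}, which you invoke only to ``fix the sign,'' is what actually supplies this missing content: it expresses each irreducible character of $\U(n,\fifisq)$ as an explicit combination of Deligne--Lusztig characters whose coefficients are symmetric-group character values divided by $z_\nu$, exactly matching the power-sum expansion of Schur functions, and comparison with that statement yields simultaneously that $\pm\,\ch^{-1}(s_{\blam})$ is irreducible and that the sign is $(-1)^{\lfloor |\blam|/2\rfloor + n(\blam)}$. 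With that repair, your outline is essentially the proof carried out in \cite{ThVi07}.
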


If $\chi^{\blam}$ is a character of $\U(m, \fifisq)$, and $\bmu \in \cP_m^{\Phi}$ so that $K^{\bmu}$ is a conjugacy class of $\U(m, \fifisq)$, then let $\chi^{\blam}(\bmu)$ denote the value of the character $\chi^{\blam}$ at any element of the conjugacy class $K^{\bmu}$.  Then Theorem \ref{Characters} implies that
$$
(-1)^{\lfloor m/2 \rfloor + n(\blam)} s_{\blam} = \sum_{\bmu \in \cP_m^{\Theta}} \chi^{\blam}(\bmu) P_{\bmu},$$
and so, from the definition of the inner product on $\Lambda$ in (\ref{inner}), we have 
$$ \chi^{\blam}(\bmu) = \langle (-1)^{\lfloor m/2 \rfloor + n(\blam)} s_{\blam}, a_{\bmu} P_{\bmu} \rangle.$$
It also follows from Theorem \ref{Characters} that the $s_{\blam}$'s form an orthonormal basis for $\Lambda$, and so we have
\begin{equation} \label{charvalues}
a_{\bmu} P_{\bmu} = \sum_{\blam \in \cP_m^{\Theta}} (-1)^{\lfloor m/2 \rfloor + n(\blam)} \chi^{\blam}(\bmu) \bar{s}_{\blam},
\end{equation}
where $\bar{s}_{\blam}$ is obtained by taking the complex conjugates of the coefficients of the $P_{\bmu}$'s, when $s_{\blam}$ is expanded in terms of the $P_{\bmu}$.

\subsection{Character degrees and real-valued characters}

Recall the definition of the hook length of a box in the Young diagram of a multi-partition, given in Section \ref{parts}.  Let $1$ denote the identity element in the group $U_m = \U(m,\fifisq)$.  If $\chi^{\blam}$ is an irreducible character of $U_m$, then the {\em degree} of $\chi^{\blam}$ is the value $\chi^{\blam}(1)$.  The following result, proven in \cite[Theorem 5.1]{ThVi07}, gives a formula for the degrees of the characters of $U_m$.

\begin{thm} \label{degrees} Let $\blam \in \cP_m^{\Theta}$.  The degree of the character $\chi^{\blam}$ is given by
$$ \chi^{\blam}(1) = q^{n(\blam')} \frac{\prod_{1 \leq i \leq m} (q^i - (-1)^i)}{\prod_{\Box \in \blam} (q^{{\bf h}(\Box)} - (-1)^{{\bf h}(\Box)})}.$$
\end{thm}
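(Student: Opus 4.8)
The plan is to realize the degree $\chi^{\blam}(1)$ as the image of $s_{\blam}$ under a multiplicative functional on $\Lambda$, to identify that functional on each block of variables $Y^{(\varphi)}$ with a principal specialization, and then to extract the product from the classical hook-content formula. To set this up, recall that the product $\odot$ on $C$ is Deligne-Lusztig (Lusztig) induction $R^{U_{i+j}}_{U_i\times U_j}$, and that Lusztig induction satisfies $R^{G}_{L}(f)(1)=\ep_{G}\ep_{L}\,(|G|_{p'}/|L|_{p'})\,f(1)$, where for $G=U_n=\U(n,\fifisq)$ the Lusztig sign is $\ep_G=(-1)^{\lfloor n/2\rfloor}$ and signs are multiplicative over direct products. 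A short check with these signs shows that, for $\psi\in C_n$,
\[ \mathcal{D}(\psi)\;=\;(-1)^{\lfloor n/2\rfloor}\,\frac{\psi(1)}{|U_n|_{p'}} \]
defines a homomorphism of graded $\cplx$-algebras $(C,\odot)\to\cplx(q)$; the floor sign built into $\mathcal{D}$ is precisely what absorbs the floor signs of $\ep_G$ and $\ep_L$. Composing with $\ch^{-1}$ and invoking Theorem \ref{Characters}, the map $\mathcal{E}:=\mathcal{D}\circ\ch^{-1}\colon\Lambda\to\cplx(q)$ is an algebra homomorphism which records the degree:
\[ \chi^{\blam}(1)\;=\;(-1)^{n(\blam)}\Big(\textstyle\prod_{i=1}^{m}(q^i-(-1)^i)\Big)\,\mathcal{E}(s_{\blam}),\qquad \blam\in\cP^{\Theta}_m. \]
Since $s_{\blam}=\prod_{\varphi\in\Theta}s_{\blam^{(\varphi)}}(Y^{(\varphi)})$ and $\mathcal{E}$ is multiplicative, it suffices to evaluate $\mathcal{E}$ on $s_{\lambda}(Y^{(\varphi)})$ for a single partition $\lambda$ and a single Frobenius orbit $\varphi$.

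Next I would identify $\mathcal{E}$ on the $Y^{(\varphi)}$-variables as a specialization. By Theorem \ref{Characters} and the correspondence (\ref{DLcorresp}), the power sum $p_n(Y^{(\varphi)})$ equals $(-1)^{n|\varphi|+1}$ times $\ch$ of the single Deligne-Lusztig character $R_{T_{n|\varphi|},\theta}$ attached to the cyclic torus $T_{n|\varphi|}$, which has order $q^{n|\varphi|}-(-1)^{n|\varphi|}$. Running the torus degree formula $R_{T,\theta}(1)=\ep_G\ep_T\,|G|_{p'}/|T|$, together with the Deligne-Lusztig sign $\ep_{T_k}=(-1)^{k-1}$ of the cyclic torus $T_k$, through $\mathcal{D}$, all signs and the factor $|U_{n|\varphi|}|_{p'}$ cancel, leaving
\[ \mathcal{E}\big(p_n(Y^{(\varphi)})\big)\;=\;\frac{1}{q^{n|\varphi|}-(-1)^{n|\varphi|}}\;=\;\sum_{j\ge 1}\big((-1)^{|\varphi|}Q^{-j}\big)^{n},\qquad Q:=(-q)^{|\varphi|}. \]
Hence $\mathcal{E}$, restricted to symmetric functions in $Y^{(\varphi)}$, is the ring homomorphism given by the specialization $Y^{(\varphi)}\mapsto\{(-1)^{|\varphi|}Q^{-1},(-1)^{|\varphi|}Q^{-2},(-1)^{|\varphi|}Q^{-3},\ldots\}$.

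Finally I would apply the hook-content formula. By homogeneity of Schur functions, $\mathcal{E}(s_{\lambda}(Y^{(\varphi)}))=(-1)^{|\varphi|\,|\lambda|}\,s_{\lambda}(Q^{-1},Q^{-2},\ldots)$, and the $N\to\infty$ limit of the hook-content formula of \cite[Section I.3]{Mac} gives $s_{\lambda}(t,t^2,t^3,\ldots)=t^{|\lambda|+n(\lambda)}\prod_{\Box\in\lambda}(1-t^{h(\Box)})^{-1}$. Taking $t=Q^{-1}$, using the identity $\sum_{\Box\in\lambda}h(\Box)=|\lambda|+n(\lambda)+n(\lambda')$, and using $Q^{h(\Box)}-1=(-1)^{|\varphi|h(\Box)}\big(q^{|\varphi|h(\Box)}-(-1)^{|\varphi|h(\Box)}\big)$, this collapses to
\[ \mathcal{E}\big(s_{\lambda}(Y^{(\varphi)})\big)\;=\;(-1)^{|\varphi|\,n(\lambda)}\,\frac{q^{|\varphi|\,n(\lambda')}}{\prod_{\Box\in\lambda}\big(q^{|\varphi|h(\Box)}-(-1)^{|\varphi|h(\Box)}\big)}. \]
Taking the product over $\varphi\in\Theta$ and recalling that ${\bf h}(\Box)=|\varphi|h(\Box)$ for $\Box\in\blam^{(\varphi)}$, that $n(\blam)=\sum_{\varphi}|\varphi|\,n(\blam^{(\varphi)})$, and that $n(\blam')=\sum_{\varphi}|\varphi|\,n((\blam^{(\varphi)})')$, one obtains $\mathcal{E}(s_{\blam})=(-1)^{n(\blam)}q^{n(\blam')}\big/\prod_{\Box\in\blam}\big(q^{{\bf h}(\Box)}-(-1)^{{\bf h}(\Box)}\big)$. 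Substituting into the formula for $\chi^{\blam}(1)$ displayed above, the two factors of $(-1)^{n(\blam)}$ cancel and Theorem \ref{degrees} follows.

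I expect the only genuine obstacle to be the sign bookkeeping: checking that $\mathcal{D}$ really is multiplicative (this is exactly where $\ep_{\U(n,\fifisq)}=(-1)^{\lfloor n/2\rfloor}$ must dovetail with the $\lfloor\,\cdot\,/2\rfloor$ occurring in Theorem \ref{Characters}), and pinning down the constant in $\mathcal{E}(p_n(Y^{(\varphi)}))$, which rests on the correct Deligne-Lusztig sign $\ep_{T_k}$ of the cyclic tori. An essentially equivalent route, and the one taken in \cite{ThVi07}, bypasses $\mathcal{D}$: if $\bmu\in\cP^{\Phi}_m$ has $\bmu^{(\{1\})}=(1^m)$ and all other coordinates empty, so that $K^{\bmu}$ is the class of the identity, then applying the isometry $\ch$ to the relation $\langle\chi^{\blam},\kappa^{\bmu}\rangle=\chi^{\blam}(1)/|U_m|$, together with $a_{\bmu}=|U_m|$ (Theorem \ref{cent}) and the identity $P_{(1^m)}(X;t)=e_m(X)$ for the $m$th elementary symmetric function, reduces the degree to $\chi^{\blam}(1)=(-1)^{\lfloor m/2\rfloor+n(\blam)}\big(\prod_{i=1}^{m}(q^i-(-1)^i)\big)\langle s_{\blam},e_m(X^{(\{1\})})\rangle$; this inner product is then evaluated by pushing $e_m(X^{(\{1\})})$ through the transform (\ref{transform}) and applying the same principal specialization.
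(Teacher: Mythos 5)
Your argument is correct and your sign bookkeeping all checks out. Note that the paper itself does not prove Theorem \ref{degrees}; it cites \cite[Theorem 5.1]{ThVi07}, and its proof of the generalization Theorem \ref{CentralValues} (which recovers the degree formula at $\alpha=1$) reproduces that argument. Your \emph{primary} route is genuinely different from this: rather than inverting the Hall--Littlewood expansion $a_{\bmu}P_{\bmu}=\sum_{\blam}(-1)^{\lfloor m/2\rfloor+n(\blam)}\chi^{\blam}(\bmu)\bar{s}_{\blam}$ via the functional $\delta_{\alpha}$ on the $X$-variables and the log-sum identity (\ref{logsum}), you build a functional $\mathcal{D}$ directly on the character ring $(C,\odot)$ out of the Lusztig induction degree formula, and its multiplicativity is automatic once one checks that the built-in sign $(-1)^{\lfloor n/2\rfloor}$ absorbs $\ep_{U_{i+j}}\ep_{U_i}\ep_{U_j}$. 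This makes transparent \emph{why} the degree specialization is a ring homomorphism of $\Lambda$, at the cost of importing $\ep_{U_n}=(-1)^{\lfloor n/2\rfloor}$, $\ep_{T_k}=(-1)^{k-1}$, and the formula $R^G_L(\psi)(1)=\ep_G\ep_L\,(|G|_{p'}/|L|_{p'})\,\psi(1)$ as external black boxes, whereas the \cite{ThVi07} argument stays entirely inside the symmetric-function calculus of Section \ref{CharMap}. The two routes then converge: your $\mathcal{E}(p_n(Y^{(\varphi)}))=1/(q^{n|\varphi|}-(-1)^{n|\varphi|})$ equals the value of $\delta_1$ obtained from the transform (\ref{transform}), both are the same principal specialization, and both finish with the same hook-content identity together with $\sum_{\Box\in\lambda}h(\Box)=|\lambda|+n(\lambda)+n(\lambda')$. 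Your closing sketch (via $a_{\bmu}=|U_m|$ and $P_{(1^m)}(X;t)=e_m(X)$) is precisely the paper's method in the proof of Theorem \ref{CentralValues}, restricted to the identity conjugacy class.
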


We will give a generalization of Theorem \ref{degrees} in Section \ref{CentralCharacters}, where we give a formula for the value of $\chi^{\blam}$ at any element of the center of the group $U_m$.

Let $\xi \in L$, where $L$ is the direct limit of the character groups $\widehat{T}_m$ defined in Section \ref{Orbits}.  If $\varphi = [\xi]$ is the $F$-orbit of $\xi$, then we let $\bar{\varphi}$ denote the $F$-orbit of the complex conjugate $\bar{\xi}$, so $\bar{\varphi} = [\bar{\xi}]$.  For any $\blam \in \cP_n^{\Theta}$, we define $\bar{\blam}$ by $\bar{\blam}^{(\varphi)} = \blam^{(\bar{\varphi})}$ for each $\varphi \in \Theta$.  The following result, proven in \cite[Lemma 3.1(ii)]{GoVi08}, describes exactly when a character of $\U(n, \fifisq)$ is real-valued.

\begin{thm} \label{RealChars} Let $\blam \in \cP_n^{\Theta}$.  Then the character $\chi^{\blam}$ of ${\rm U}(n, \fifisq)$ is real-valued if and only if $\bar{\blam} = \blam$.
\end{thm}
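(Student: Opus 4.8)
Since $\chi^\blam$ is the character of a complex representation, $\overline{\chi^\blam(g)} = \chi^\blam(g^{-1})$ for every $g$, so $\chi^\blam$ is real-valued if and only if $\chi^\blam = (\chi^\blam)^\ast$, where $\psi^\ast$ denotes the contragredient class function $\psi^\ast(g) = \psi(g^{-1})$. By Theorem \ref{Characters} the characters $\chi^\blam$, $\blam\in\cP^\Theta_m$, are pairwise distinct, so it is enough to prove $(\chi^\blam)^\ast = \chi^{\bar\blam}$.

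The plan is to transport the $\cplx$-linear involution $(\cdot)^\ast$ through the characteristic map $\ch$. Inversion on $\U(n,\fifisq)$ carries the conjugacy class $K^\bmu$ bijectively to $K^{\bmu^\ast}$, where $\bmu^{\ast(f)} = \bmu^{(f^{-1})}$ and $f^{-1}=\{\alpha^{-1}:\alpha\in f\}$ is again an $F$-orbit; this is read off the Ennola--Wall parametrization, since over $\fifibar$ the inverse of a Jordan block for $\alpha$ is conjugate to a Jordan block of the same size for $\alpha^{-1}$, and $\U(n,\fifisq)$-conjugacy is $\bG_n$-conjugacy intersected with $\U(n,\fifisq)$ (\cite{En62,Wa62}). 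Hence $\kappa_\bmu^\ast = \kappa_{\bmu^\ast}$, so $\ch$ intertwines $(\cdot)^\ast$ on $C$ with the $\cplx$-linear map on $\Lambda$ sending $P_\bmu\mapsto P_{\bmu^\ast}$; using the product formula $P_\bmu = q^{-n(\bmu)}\prod_{f\in\Phi}P_{\bmu^{(f)}}(X^{(f)};q^{-|f|})$ together with $|f^{-1}|=|f|$ and $n(\bmu^\ast)=n(\bmu)$, this map is precisely the graded $\cplx$-algebra automorphism of $\Lambda$ that relabels the variables by $X^{(f)}\mapsto X^{(f^{-1})}$.

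Next I compute the effect of this automorphism on the $Y$-variables via the transform (\ref{transform}). Applying $X^{(f)}\mapsto X^{(f^{-1})}$ to the right-hand side of (\ref{transform}) and substituting $\alpha\mapsto\alpha^{-1}$ in the sum over $T_{n|\varphi|}$ --- so that $f_\alpha\mapsto f_\alpha^{-1}$, $|f_\alpha^{-1}|=|f_\alpha|$, and $\xi(\alpha^{-1})=\xi(\alpha)^{-1}=\overline{\xi(\alpha)}$ as $\xi(\alpha)$ is a root of unity --- while noting $|\bar\varphi|=|\varphi|$, one recovers exactly the right-hand side of (\ref{transform}) for $p_n(Y^{(\bar\varphi)})$. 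Thus the automorphism relabels $Y^{(\varphi)}\mapsto Y^{(\bar\varphi)}$, and therefore $s_\blam = \prod_{\varphi\in\Theta}s_{\blam^{(\varphi)}}(Y^{(\varphi)})$ is sent to $\prod_{\varphi\in\Theta}s_{\blam^{(\varphi)}}(Y^{(\bar\varphi)}) = s_{\bar\blam}$. Since the conjugation $\varphi\mapsto\bar\varphi$ only permutes the blocks without changing $|\varphi|$ or the shapes $\blam^{(\varphi)}$, we have $|\bar\blam|=|\blam|$ and $n(\bar\blam)=n(\blam)$, so the normalizing sign $(-1)^{\lfloor|\blam|/2\rfloor+n(\blam)}$ is unchanged; hence $(\chi^\blam)^\ast = \ch^{-1}((-1)^{\lfloor|\bar\blam|/2\rfloor+n(\bar\blam)}s_{\bar\blam}) = \chi^{\bar\blam}$, which completes the proof.

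The step I expect to be the main obstacle is the careful bookkeeping in the last two paragraphs: pinning down the inversion map on conjugacy classes as $\bmu\mapsto\bmu^\ast$ from the Ennola--Wall data, and keeping the sign factors $(-1)^{n|\varphi|-1}$ and the index substitution straight in (\ref{transform}) so that $p_n(Y^{(\varphi)})$ is sent cleanly to $p_n(Y^{(\bar\varphi)})$; everything else is formal. Alternatively one can bypass $\ch$ using the Deligne--Lusztig identity $\overline{R_{T_\eta,\theta}}=R_{T_\eta,\bar\theta}$ and the fact that the transition between the bases $\{R_\bnu\}$ and $\{\chi^\blam\}$ is given by the real, block-diagonal character table of symmetric groups, but the combinatorial checking needed is of comparable length.
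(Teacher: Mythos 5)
Your argument is correct. The paper itself does not prove this statement but cites \cite[Lemma 3.1(ii)]{GoVi08}, so there is no in-text proof to compare against; the route you take is the natural one in this framework and is almost certainly what that lemma does. The key steps all check out: $\kappa_{\bmu}^{\ast}=\kappa_{\bmu^{\ast}}$ because $U_n$-conjugacy coincides with $\GL(n,\fifibar)$-conjugacy restricted to $U_n$ (connected centralizers in type $A$) and inversion sends the Jordan data at $\alpha$ to the same Jordan data at $\alpha^{-1}$; the induced map on $\Lambda$ is the algebra automorphism $X^{(f)}\mapsto X^{(f^{-1})}$ since $|f^{-1}|=|f|$ and $n(\bmu^{\ast})=n(\bmu)$; the change of variable $\alpha\mapsto\alpha^{-1}$ in (\ref{transform}), together with $\xi(\alpha^{-1})=\overline{\xi(\alpha)}$ and $|\bar\varphi|=|\varphi|$, shows this automorphism acts on the $Y$-variables by $Y^{(\varphi)}\mapsto Y^{(\bar\varphi)}$; reindexing the product for $s_{\blam}$ gives $s_{\blam}\mapsto s_{\bar\blam}$, and the normalizing sign is invariant because $|\bar\blam|=|\blam|$ and $n(\bar\blam)=n(\blam)$. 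Hence $(\chi^{\blam})^{\ast}=\chi^{\bar\blam}$ and, by uniqueness of the parametrization in Theorem \ref{Characters}, $\chi^{\blam}$ is real-valued if and only if $\bar\blam=\blam$.
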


\section{Values of central characters} \label{CentralCharacters}

In this section, we obtain a formula for the value of any irreducible character of $\U(n, \fifisq)$ at any element of the center of this group, which is the set of scalar matrices $\{ \alpha I \mid \alpha \in T_1 \}$.  The formula we obtain is analogous to the formula for $\GL(n, \fifi)$ obtained by Macdonald in \cite[IV.6, Example 2]{Mac}, and our proof of Theorem \ref{CentralValues} below follows the proof for $\GL(n, \fifi)$ which is suggested in \cite{Mac}.  The proof is also similar to \cite[Theorem 5.1]{ThVi07}.

\begin{thm} \label{CentralValues} Let $z = \alpha I$ be in the center of ${\rm U}(n, \fifisq)$, where $\alpha \in T_1$, and let $\chi^{\blam}$ be an irreducible character of ${\rm U}(n, \fifisq)$.  Define $\omega_{\blam} \in L_1$ by
$$ \omega_{\blam} = \prod_{\varphi \in \Theta} \prod_{\xi \in \varphi} \xi^{|\blam^{(\varphi)}|}.$$
Then, $\chi^{\blam}(z) = \omega_{\blam}(\alpha)_1 \chi^{\blam}(1)$.
\end{thm}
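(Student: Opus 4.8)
\emph{Outline.} The plan is to push everything through the characteristic map $\ch$ of Theorem~\ref{Characters}, evaluate at the conjugacy class of $z=\alpha I$, and then separate off the dependence on $\alpha$ by means of a ``twist'' automorphism of $\Lambda$ coming from multiplication by $\alpha$ on $\fifibar^{\times}$. Concretely: by the description of conjugacy classes following Theorem~\ref{conj}, $z=\alpha I$ lies in $K^{\bmu}$ with $\bmu^{(\{\alpha\})}=(1^n)$ and $\bmu^{(f)}=\varnothing$ otherwise; since $\alpha\in T_1$ we have $\alpha^{-q}=\alpha$, so $\{\alpha\}$ is a \emph{singleton} $F$-orbit, $|\{\alpha\}|=1$, for every $\alpha\in T_1$. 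Using that $\ch$ is an isometric isomorphism together with (\ref{inner}) and $|K^{\bmu}|=|U_n|/a_{\bmu}$, one gets $\chi^{\blam}(z)=a_{\bmu}\langle\chi^{\blam},\kappa^{\bmu}\rangle_C=(-1)^{\lfloor n/2\rfloor+n(\blam)}\langle s_{\blam},a_{\bmu}P_{\bmu}\rangle_{\Lambda}$. Now $P_{\bmu}=q^{-\binom n2}P_{(1^n)}(X^{(\{\alpha\})};q^{-1})=q^{-\binom n2}e_n(X^{(\{\alpha\})})$ (the Hall–Littlewood function of a single column is $e_n$), and $a_{\bmu}=(-1)^na_{(1^n)}(-q)$ depends only on $n$ and $q$ because it involves only $|\{\alpha\}|=1$. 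Hence $a_{\bmu}P_{\bmu}=c_n\,e_n(X^{(\{\alpha\})})$ with $c_n$ \emph{independent of $\alpha$}; taking $\alpha=1$ this is precisely the input of the degree computation of Theorem~\ref{degrees}, so $\chi^{\blam}(1)=(-1)^{\lfloor n/2\rfloor+n(\blam)}c_n\langle s_{\blam},e_n(X^{(\{1\})})\rangle$. Thus it suffices to prove $\langle s_{\blam},e_n(X^{(\{\alpha\})})\rangle=\omega_{\blam}(\alpha)_1\langle s_{\blam},e_n(X^{(\{1\})})\rangle$.

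\emph{The twist.} Multiplication by $\alpha$ on $\fifibar^{\times}$ commutes with $F$ (again since $\alpha^{-q}=\alpha$), hence permutes $\Phi$; write $\alpha f$ for the image of $f$, and note $|\alpha f|=|f|$. Because the Hall–Littlewood functions $\{P_{\lambda}(X^{(f)};q^{-|f|})\}_{\lambda}$ form a $\cplx$-basis of the symmetric functions in $X^{(f)}$ (Section~\ref{symmfuns}), the ring $\Lambda$ is freely generated as a $\cplx$-algebra by the $p_k(X^{(f)})$, $f\in\Phi$, $k\ge1$. I would therefore define the graded $\cplx$-algebra automorphism $\Psi_{\alpha}$ of $\Lambda$ by $\Psi_{\alpha}\big(p_k(X^{(f)})\big)=p_k(X^{(\alpha f)})$; then $\Psi_{\alpha}\big(e_n(X^{(\{1\})})\big)=e_n(X^{(\{\alpha\})})$, and since $|\alpha f|=|f|$ one checks $\Psi_{\alpha}(P_{\bmu})=P_{\bmu'}$ with $(\bmu')^{(\alpha f)}=\bmu^{(f)}$ and $a_{\bmu'}=a_{\bmu}$, so $\Psi_{\alpha}$ is an \emph{isometry} of the Hermitian form (\ref{inner}) on each graded piece; in particular $\Psi_{\alpha}^{-1}$ is its adjoint.

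\emph{Action on $s_{\blam}$ and conclusion.} Apply $\Psi_{\alpha}$ to the transform (\ref{transform}). Since $\alpha f_{\beta}=f_{\alpha\beta}$ and $\beta\mapsto\alpha\beta$ is a bijection of $T_{r|\varphi|}$, re-indexing the sum gives $\Psi_{\alpha}\big(p_r(Y^{(\varphi)})\big)=\xi(\alpha)^{-1}p_r(Y^{(\varphi)})$ for $\xi\in\varphi$, and by (\ref{pairing}) this scalar is $\zeta_{\varphi}(\alpha)^{-r}$, where $\zeta_{\varphi}(\alpha):=\xi(\alpha)_{|\varphi|}$ (well defined, as all elements of $\varphi$ agree on $T_1$ because $F$ fixes $T_1$ pointwise). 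Consequently $\Psi_{\alpha}$ scales the degree-$|\blam^{(\varphi)}|$ polynomial $s_{\blam^{(\varphi)}}(Y^{(\varphi)})$ by $\zeta_{\varphi}(\alpha)^{-|\blam^{(\varphi)}|}$, whence $\Psi_{\alpha}(s_{\blam})=\big(\prod_{\varphi}\zeta_{\varphi}(\alpha)^{|\blam^{(\varphi)}|}\big)^{-1}s_{\blam}$. A short computation with the norm maps identifies $\prod_{\varphi}\zeta_{\varphi}(\alpha)^{|\blam^{(\varphi)}|}$ with $\omega_{\blam}(\alpha)_1$ — the point being that $\prod_{\xi\in\varphi}\xi\in L_1$ evaluated at $\alpha$ equals $\zeta_{\varphi}(\alpha)$ (not $\zeta_{\varphi}(\alpha)^{|\varphi|}$), since one must use an $N_{|\varphi|,1}$-preimage of $\alpha$ in the pairing. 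Hence $\Psi_{\alpha}(s_{\blam})=\omega_{\blam}(\alpha)_1^{-1}s_{\blam}$, and using that $\Psi_{\alpha}$ is unitary with adjoint $\Psi_{\alpha}^{-1}$,
$$\langle s_{\blam},e_n(X^{(\{\alpha\})})\rangle=\langle s_{\blam},\Psi_{\alpha}(e_n(X^{(\{1\})}))\rangle=\langle\Psi_{\alpha}^{-1}(s_{\blam}),e_n(X^{(\{1\})})\rangle=\omega_{\blam}(\alpha)_1\langle s_{\blam},e_n(X^{(\{1\})})\rangle,$$
which combined with the reduction above yields $\chi^{\blam}(z)=\omega_{\blam}(\alpha)_1\,\chi^{\blam}(1)$.

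\emph{Main obstacle.} The delicate part is the third step: the computation of $\Psi_{\alpha}$ on the $Y$-side through (\ref{transform}) and, above all, the matching of $\prod_{\varphi}\zeta_{\varphi}(\alpha)^{|\blam^{(\varphi)}|}$ with $\omega_{\blam}(\alpha)_1$, which forces careful bookkeeping of the direct-limit pairings $\xi(\alpha)_m$ and the maps $N_{m,r}$, $N_{m,r}^{\star}$; one should also verify that the conjugate-linearity conventions in (\ref{inner}) do not replace $\omega_{\blam}(\alpha)_1$ by its conjugate (it comes out correctly with the standard convention, but this must be written with care). As a sanity check and alternative route: the statement also follows from the standard fact $R_{T,\theta}(z)=\theta(z)R_{T,\theta}(1)$ for $z$ central, together with the observation that the expansion of $\chi^{\blam}$ in the Deligne–Lusztig basis $\{R_{\bnu}\}$ of Theorem~\ref{Characters} is supported on those $\bnu$ with $|\bnu^{(\varphi)}|=|\blam^{(\varphi)}|$ for all $\varphi$, on which $\theta(z)$ takes the constant value $\omega_{\blam}(\alpha)_1$.
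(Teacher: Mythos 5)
Your proof is correct and takes a genuinely different route from the paper's. The paper introduces a specialization homomorphism $\delta_{\alpha}\colon\Lambda\to\cplx$ that kills $p_k(X^{(f)})$ for $f\neq\{\alpha\}$, applies $\delta_{\alpha}\otimes 1$ to the Cauchy-type identity (\ref{logsum}) to write $e_n(X^{(\{\alpha\})})=\sum_{\blam}\delta_{\alpha}(s_{\blam})\bar{s}_{\blam}$, and then evaluates $\delta_{\alpha}(s_{\blam})$ explicitly via (\ref{transform}) and Macdonald's principal-specialization formula, in effect re-deriving the degree formula of Theorem \ref{degrees} with an extra scalar attached. Your argument instead factors out the $\alpha$-dependence up front: the twist $\Psi_{\alpha}$, induced by the $F$-equivariant translation $\gamma\mapsto\alpha\gamma$ on $\fifibar^{\times}$, is a graded unitary automorphism of $\Lambda$ carrying $e_n(X^{(\{1\})})$ to $e_n(X^{(\{\alpha\})})$ and (by pushing it through (\ref{transform}) and re-indexing the sum over $T_{r|\varphi|}$) scaling $s_{\blam}$ by $\omega_{\blam}(\alpha)_1^{-1}$; unitarity then reduces the whole computation to the $\alpha=1$ case, which is exactly Theorem \ref{degrees}. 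Both proofs turn on the same delicate bookkeeping that identifies $\xi(\alpha)_{|\varphi|}^{|\lambda|}$ with $\bigl(\prod_{\xi\in\varphi}\xi^{|\lambda|}\bigr)(\alpha)_1$ via (\ref{pairing}) — this is Equation (\ref{xiprod}) in the paper, and you are right to flag it as the delicate point. The paper's approach buys an explicit, self-contained formula for $\delta_{\alpha}(s_{\blam})$; yours buys a cleaner conceptual factorization $\chi^{\blam}(\alpha I)=\omega_{\blam}(\alpha)_1\,\chi^{\blam}(1)$ that uses the degree formula as a black box and makes visible the group-theoretic origin of the central-character eigenvalue. Your closing alternative via $R_{T,\theta}(zg)=\theta(z)R_{T,\theta}(g)$ for central $z$, combined with the fact that the Deligne--Lusztig expansion of $\chi^{\blam}$ is supported on $\bnu$ with $|\bnu^{(\varphi)}|=|\blam^{(\varphi)}|$ for all $\varphi$, is also valid and perhaps the most structural of the three; it again requires the same pairing bookkeeping to identify the common value of $\theta$ on the center.
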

\begin{proof} Let $z = \alpha I$ be an element of the center of $\U(m, \fifisq)$, with $\alpha \in T_1$.  The conjugacy class $\{ z \}$ corresponds to the element $\bmu_{\alpha} \in \cP_m^{\Phi}$ defined by
$$ \bmu_{\alpha}^{(\{\alpha\})} = (1^m), \quad \text{ and } \quad \bmu_{\alpha}^{(f)} = \varnothing \quad \text{ if } \quad \{ \alpha \} \neq f \in \Phi.$$
From (\ref{charvalues}), we have
\begin{equation} \label{value}
a_{\bmu_{\alpha}} P_{\bmu_{\alpha}} = \sum_{\blam \in \cP_m^{\Theta}} (-1)^{\lfloor m/2 \rfloor + n(\blam)} \chi^{\blam}(z) \bar{s}_{\blam}.
\end{equation}
From Theorem \ref{cent} and the definition of $\bmu_{\alpha}$, we have
$$\begin{array}{rcl}
a_{\bmu_{\alpha}} = (-1)^m a_{(1^m)}(-q) & = & (-1)^m (-q)^{m + 2n((1^m))} \prod_{j=1}^m \left( 1 - (-q)^{-j} \right) \\
 & = & (-1)^m (-q)^{m(m+1)/2} \prod_{j=1}^m \left( 1 - (-q)^{-j} \right) \\
 & = & \prod_{j=1}^m \left(1 - (-q)^j \right).
\end{array}$$
Now, since $P_{(1^m)}(x ; t) = e_m(x)$, the elementary symmetric function (by \cite[III.2.8]{Mac}), we have
\begin{equation} \label{aPfactors}
\begin{array}{rcl}
a_{\bmu_{\alpha}} P_{\bmu_{\alpha}} & = & \prod_{j=1}^m \left( 1 - (-q)^{j} \right) P_{(1^m)}\left(X^{(\{\alpha\})}; (-q)^{-1}\right) \\
 & = & \prod_{j=1}^m \left(1 - (-q)^j \right) e_m(X^{(\{ \alpha \})}).
\end{array}
\end{equation}
So, from (\ref{value}) and (\ref{aPfactors}), we have
\begin{equation} \label{value2}
\prod_{j=1}^m \left(1 - (-q)^j \right) e_m(X^{(\{ \alpha \})}) = \sum_{\blam \in \cP_m^{\Theta}} (-1)^{\lfloor m/2 \rfloor + n(\blam)} \chi^{\blam}(z) \bar{s}_{\blam}.
\end{equation}

Define a $\cplx$-algebra homomorphism $\delta_{\alpha}: \Lambda \rightarrow \cplx$ by
$$ \delta_{\alpha}(p_m(X^{(f)})) = \left\{ \begin{array}{ll} (-1)^{m-1}/(q^m - (-1)^m) & \text{ if } f = \{ \alpha \}, \\ 0 & \text{ if } f \neq \{ \alpha \}. \end{array} \right.$$
Now, as in \cite[Equation (5.4)]{ThVi07}, we have
\begin{equation} \label{logsum}
\log \left( \sum_{\blam \in \cP^{\Theta}} s_{\blam} \otimes \bar{s}_{\blam} \right) = \sum_{n \geq 1} \frac{1}{n} \sum_{f \in \Phi} (q^{n|f|} - (-1)^{n|f|}) p_n(X^{(f)}) \otimes p_n(X^{(f)}).
\end{equation}
Noting that $\delta_{\alpha}$ passes through the logarithm function on symmetric functions, we apply $\delta_{\alpha} \otimes 1$ to both sides of (\ref{logsum}) and obtain
$$\log \left( \sum_{\blam \in \cP^{\Theta}} \delta_{\alpha}(s_{\blam}) \bar{s}_{\blam} \right) = \sum_{m \geq 1} \frac{(-1)^{m-1}}{m} p_m(X^{( \{\alpha\} )}) = \log \prod_i (1 + X_i^{( \{ \alpha \} )}).$$
Applying the exponential map, and expanding the product on the right into a sum of elementary symmetric functions, we have
$$ \sum_{m \geq 0} e_m(X^{( \{ \alpha \} )}) = \sum_{ \blam \in \cP^{\Theta}} \delta_{\alpha}(s_{\blam}) \bar{s}_{\blam}, \; \text{ and so } \; e_m(X^{( \{ \alpha \} )}) = \sum_{\blam \in \cP_m^{\Theta}} \delta_{\alpha}(s_{\blam}) \bar{s}_{\blam}. $$
Now, by comparing the coefficients of $\bar{s}_{\blam}$ in this with those in (\ref{value2}), we have
\begin{equation} \label{value3}
\chi^{\blam}(z) = (-1)^{\lfloor m/2 \rfloor + n(\blam)} \prod_{j=1}^m \left(1 - (-q)^j \right) \delta_{\alpha} (s_{\blam}).
\end{equation}

Using the transform in Equation (\ref{transform}), we calculate $\delta_{\alpha}(p_m(Y^{(\varphi)}))$, for any $\varphi \in \Theta$, and obtain
$$\delta_{\alpha}(p_m(Y^{(\varphi)})) = \frac{\xi(\alpha)_{m|\varphi|}}{q^{m|\varphi|}- (-1)^{m|\varphi|}},$$
where $\xi \in \varphi$, and since $\alpha \in T_1$, the value is invariant under the choice of $\xi$.  Since we also have $\xi \in T_{|\varphi|}$, we may apply Equation (\ref{pairing}), and obtain
$$\delta_{\alpha}(p_m(Y^{(\varphi)})) = (-1)^{|\varphi|m} \frac{\xi(\alpha)^m_{|\varphi|}}{(-q)^{|\varphi|m} - 1} = (-1)^{|\varphi|m} \sum_{i \geq 1} \xi(\alpha)^m_{|\varphi|} (-q)^{-i |\varphi| m}.$$
So, we have
$$ \delta_{\alpha}(p_m(Y^{(\varphi)})) = (-1)^{|\varphi|m} p_m \Big( \xi(\alpha)_{|\varphi|} (-q)^{-|\varphi|}, \xi(\alpha)_{|\varphi|} (-q)^{-2|\varphi|}, \ldots \Big).$$
That is, when $\delta_{\alpha}$ is applied to a homogeneous symmetric function in $Y^{(\varphi)}$ of graded degree $|\varphi|m$, the effect is multiplication by $(-1)^{|\varphi|m}$ and replacing the variable $Y_i^{(\varphi)}$ by $\xi(\alpha)_{|\varphi|} (-q)^{-i|\varphi|}$.  In particular, for any $\lambda \in \cP$, $\varphi \in \Theta$, we have
$$\begin{array}{rcl}
\delta_{\alpha}\left(s_{\lambda}(Y^{(\varphi)}) \right)  & =  &(-1)^{|\varphi| \, |\lambda|} s_{\lambda} \Big( \xi(\alpha)_{|\varphi|} (-q)^{-|\varphi|}, \xi(\alpha)_{|\varphi|} (-q)^{-2|\varphi|}, \ldots \Big)\\
 & = & q^{-|\varphi| \, |\lambda|} \xi(\alpha)_{|\varphi|}^{|\lambda|} s_{\lambda} \Big(1,  (-q)^{-|\varphi|}, (-q)^{-2|\varphi|}, \ldots \Big).
\end{array}$$
Now, applying \cite[I.3, Example 2]{Mac}, we may rewrite the above as
\begin{equation} \label{delta1}
\delta_{\alpha}\left(s_{\lambda}(Y^{(\varphi)})\right) = \xi(\alpha)_{|\varphi|}^{|\lambda|} \frac{(-1)^{|\varphi| |\lambda|} (-q)^{-|\varphi| (|\lambda| + n(\lambda))}}{\prod_{\Box \in \lambda} \left( 1 - (-q)^{-|\varphi| h(\Box)} \right)}.
\end{equation}
As mentioned above, regardless of the choice of $\tilde{\xi} \in \varphi$, the value $\tilde{\xi}(\alpha)_{|\varphi|}$ is unchanged.  So, for any $\tilde{\xi} \in \varphi$, we have
$$ \tilde{\xi}(\alpha)_{|\varphi|}^{|\lambda|}= \left( \prod_{\xi \in \varphi} \xi(\alpha)_{|\varphi|}^{|\lambda|} \right)^{1/|\varphi|}.$$
Now, we have $\prod_{\xi \in \varphi} \xi^{|\lambda|}$ is invariant under $F$-action, and so it is in $L_1$.  So, we may evaluate the above character value with the point of view that $\prod_{\xi \in \varphi} \xi^{|\lambda|} \in L_1$, and applying (\ref{pairing}), we have
\begin{equation} \label{xiprod}
\tilde{\xi}(\alpha)_{|\varphi|}^{|\lambda|}= \left( \prod_{\xi \in \varphi} \xi(\alpha)^{|\lambda||\varphi|}_1 \right)^{1/|\varphi|} = \prod_{\xi \in \varphi} \xi(\alpha)^{|\lambda|}_1.
\end{equation}
So, in Equation (\ref{delta1}), we may replace $\xi(\alpha)_{|\varphi|}^{|\lambda|}$ by $\prod_{\xi \in \varphi} \xi(\alpha)_1^{|\lambda|}$.  

Since $s_{\blam} = \prod_{\varphi \in \Theta} s_{\blam^{(\varphi)}}(Y^{(\varphi)})$, then from (\ref{delta1}) and (\ref{xiprod}), we now have
\begin{equation} \label{deltavalue}
\delta_{\alpha}(s_{\blam}) = 
\left( \prod_{\varphi \in \Theta} \prod_{\xi \in \varphi} \xi^{|\blam^{(\varphi)}|}(\alpha)_1 \right) \left( \frac{(-1)^{|\blam|} (-q)^{-|\blam| - n(\blam)}}{\prod_{\Box \in \blam} (1 - (-q)^{-{\bf h}(\Box)})} \right).
\end{equation}
Finally, we may substitute (\ref{deltavalue}) into (\ref{value3}) to compute the value of $\chi^{\blam}(z)$.  The first factor of (\ref{deltavalue}) is exactly $\omega_{\blam}(\alpha)_1$.  Now consider the second factor of the product in (\ref{deltavalue}), together with the other factors on the right-hand side of (\ref{value3}) other than $\delta_{\alpha}(s_{\blam})$.  It follows from Theorem \ref{degrees}, together with the calculation made at the end of the proof of Theorem \ref{degrees} as it appears in \cite[Theorem 5.1]{ThVi07}, that  
\begin{equation} \label{degreecalc}
\chi^{\blam}(1) = (-1)^{\lfloor m/2 \rfloor + n(\blam)} (-1)^{|\blam|} (-q)^{-|\blam| - n(\blam)} \frac{\prod_{j=1}^m \left(1 - (-q)^j \right)}{\prod_{\Box \in \blam} (1 - (-q)^{-{\bf h}(\Box)})}.
\end{equation}
Combining (\ref{value3}) with (\ref{deltavalue}) and (\ref{degreecalc}), we finally have $\chi^{\blam}(z) = \omega_{\blam}(\alpha)_1 \chi^{\blam}(1)$, as desired.
\end{proof}

For an irreducible representation $\pi$ of a finite group $G$ with character $\chi$, it follows from Schur's Lemma that the center of $G$ acts by scalars on the space corresponding to $\pi$, yielding the {\em central character} of the representation $\pi$ (or of the character $\chi$), which we denote $\omega_{\chi}$.  If $G = \U(n, \fifisq)$, and $\chi = \chi^{\blam}$, then Theorem \ref{CentralValues} may also be stated as $\omega_{\chi}(z) = \omega_{\blam}(\alpha)_1$, for any element $z = \alpha I$ of the center of $\U(n, \fifisq)$.

\section{Regular, semisimple, and unipotent characters}  \label{RegSemUni}

\subsection{Regular and semisimple characters} \label{RegSemi}

If $G$ is a finite group of Lie type defined over $\fifi$, and $q$ is the power of the prime $p$, where $p$ is a {\em good prime} for $G$ (see \cite{Ca85}), then a character of $G$ is called {\em semisimple} when its degree is not divisible by $p$.  In the case that $G = \U(n,\fifisq)$, $p$ is always a good prime for $G$ (since the group is type $A$).  From Theorem \ref{degrees}, the $p$-part of the degree of the character $\chi^{\blam}$ of $U_m$, where $\blam \in \cP_m^{\Theta}$, is exactly $q^{n(\blam')}$.  Thus, the character $\chi^{\blam}$ is semisimple exactly when $n(\blam') = 0$.  Since 
$$ n(\blam') = \sum_{\varphi \in \Theta} |\varphi| n((\blam')^{(\varphi)}), \quad \text{where} \quad n(\lambda') = \sum_{i \geq 1} (i-1)\lambda'_i \quad \text{for} \quad \lambda \in \cP,$$ 
it follows that $n(\blam') = 0$ if and only if $\ell((\blam')^{(\varphi)}) \leq 1$ for every $\varphi \in \Theta$.  So,
\begin{equation} \label{semisimple}
\chi^{\blam} \text{ is semisimple} \Leftrightarrow \text{for all } \varphi \in \Theta, \; \blam^{(\varphi)} = (1^{m_{\varphi}}) \; \text{for some } m_{\varphi} \geq 0.
\end{equation} 

Recall that the {\em Gelfand-Graev character} $\Gamma$, of a finite group of Lie type $G$, is obtained by inducing a non-degenerate linear character of the Sylow $p$-subgroup of $G$ up to $G$ (see \cite[Section 8.1]{Ca85}).  An irreducible character of $G$ is {\em regular} if it is a component of the Gelfand-Graev character $\Gamma$.  When $G = \U(n, \fifisq)$, it follows from results in \cite{dellusz} that for an irreducible character $\chi^{\blam}$ of $U_n$,
$$ \chi^{\blam} \text{ is regular} \Leftrightarrow \ell(\blam^{(\varphi)}) \leq 1 \text{ for every } \varphi \in \Theta.$$
The following result, proven in \cite{Vi09}, gives the Frobenius-Schur indicators of the real-valued regular and semisimple characters of the finite unitary groups.  We note that Theorem \ref{FScentral}(1) below also follows from \cite[Theorem 7(ii)]{Ohm2}.

\begin{thm} \label{FScentral} Let $\chi$ be a real-valued semisimple or regular character of ${\rm U}(n, \fifisq)$.  Then:
\begin{enumerate}
\item If $n$ is odd or $q$ is even, then $\ep(\chi) = 1$.
\item If $n$ is even and $q$ is odd, then $\ep(\chi) = \omega_{\chi}(\beta I)$, where $\beta$ is a multiplicative generator for $T_1$, and $\omega_{\chi}$ is the central character corresponding to $\chi$.
\end{enumerate}
\end{thm}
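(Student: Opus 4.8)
The plan is to trade the Frobenius--Schur indicator of $\chi$ for the reality of an \emph{extension} of $\chi$ to $U_n\langle\tau\rangle$, with $\tau$ the transpose--inverse automorphism, and then to decide that reality by evaluating the extension at a distinguished element of the coset $U_n\tau$ produced in \cite{GoVi08}.

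\emph{Step 1 (reduction to reality of an extension).} A real-valued $\chi=\chi^{\blam}$ is $\tau$-invariant (Theorem \ref{RealChars}), so it has exactly two extensions $\tilde\chi_{+},\tilde\chi_{-}$ to $U_n\langle\tau\rangle$, with $\tilde\chi_{-}=\tilde\chi_{+}\otimes\mathrm{sgn}$ (sgn the nontrivial character of $U_n\langle\tau\rangle/U_n$) and $\overline{\tilde\chi_{+}}\in\{\tilde\chi_{+},\tilde\chi_{-}\}$. By \cite[Corollary 5.3]{ThVi07}, $\chi$ is orthogonal exactly when the $\tilde\chi_{\pm}$ are real-valued and symplectic exactly when they are not. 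Since $\tilde\chi_{+}$ is non-real iff $\overline{\tilde\chi_{+}}=\tilde\chi_{-}$, iff $\overline{\tilde\chi_{+}(w\tau)}=-\tilde\chi_{+}(w\tau)$ for all $w\in U_n$, iff every value $\tilde\chi_{+}(w\tau)$ is purely imaginary, we conclude: $\ep(\chi)=-1$ precisely when $\tilde\chi_{+}(w\tau)\in i\real$ for all $w\in U_n$, and $\ep(\chi)=1$ precisely when some $\tilde\chi_{+}(w\tau)$ has nonzero real part. Thus it suffices to exhibit a single $w$ with $\tilde\chi_{+}(w\tau)$ real and nonzero (forcing $\ep(\chi)=1$), resp.\ purely imaginary and nonzero (forcing $\ep(\chi)=-1$).

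\emph{Step 2 (the case $n$ odd, giving part (1)).} By \cite{GoVi08} there is $x\tau\in U_n\langle\tau\rangle$ with $(x\tau)^2=u$ a regular unipotent element of $U_n$, and \cite[Theorem 6.3]{GoVi08} shows $\tilde\chi_{\pm}(x\tau)\in\{1,-1\}$ when $\chi$ is real-valued semisimple; the same argument, run with the dual Gelfand--Graev character in place of the Gelfand--Graev character (the group having connected center, these are multiplicity-free with constituents the semisimple, resp.\ the regular, characters), handles real-valued regular $\chi$. Either way $\tilde\chi_{\pm}(x\tau)$ is a nonzero rational, so Step 1 gives $\ep(\chi)=1$; the case $q$ even is similar and simpler, the relevant squares again being unipotent.

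\emph{Step 3 (the case $n=2m$ even, $q$ odd, giving part (2); main obstacle).} Here \cite{GoVi08} provides $y\tau$ with $(y\tau)^2=-u$, $u$ regular unipotent, and one must evaluate $\tilde\chi_{\pm}(y\tau)$. Following the relative-trace / Gelfand--Graev computation of \cite[Section 6]{GoVi08}, one expands $\tilde\chi_{\pm}(y\tau)$ via the extended (dual) Gelfand--Graev character together with the Deligne--Lusztig data of $\chi^{\blam}$ from Theorem \ref{Characters}; this reduces $\tilde\chi_{\pm}(y\tau)$ to a Gauss-sum-type quantity times a central-character factor that appears when the scalar $-I$ in $-u$ is moved past $\chi$, and \emph{that factor is computed by Theorem \ref{CentralValues}}. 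The expected conclusion is $\tilde\chi_{\pm}(y\tau)^2=\omega_{\chi}(\beta I)\,q$ --- real and nonzero when $\omega_{\chi}(\beta I)=1$, purely imaginary and nonzero when $\omega_{\chi}(\beta I)=-1$ --- so Step 1 yields $\ep(\chi)=\omega_{\chi}(\beta I)$, and only the sign of this square (not the finer value $\pm\sqrt{-q}$) is needed. The main difficulty is precisely this trace computation: extracting the \emph{sign} of $\tilde\chi_{\pm}(y\tau)^2$ requires simultaneously controlling the $(-1)^{\lfloor m/2\rfloor+n(\blam)}$ and $q$-versus-$(-q)$ twists built into the characteristic map (Theorem \ref{Characters}), the value of a semisimple or regular character on a regular unipotent element, and --- crucially --- the difference between $\omega_{\chi}(-I)$ and $\omega_{\chi}(\beta I)$, which disagree when $q\equiv 3\pmod{4}$; one must show the surviving central factor is the latter. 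A secondary point is that Alvis--Curtis duality, used to pass between the regular and semisimple cases, preserves Lusztig series --- hence central characters, by Theorem \ref{CentralValues} --- and Frobenius--Schur indicators, which one checks by realizing the duality on $U_n\langle\tau\rangle$.
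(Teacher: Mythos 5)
There is a genuine gap, and it sits exactly where you flag "the main difficulty." First, note that the paper does not prove Theorem \ref{FScentral} at all: it is quoted from \cite{Vi09} (with part (1) also attributed to \cite[Theorem 7(ii)]{Ohm2}), and the logical flow of the paper runs in the opposite direction from your plan --- Theorem \ref{FScentral} together with Theorem \ref{CentralValues} is used as an input to evaluate central characters and count symplectic semisimple characters, not deduced from values of extended characters on the coset $U_n\tau$. Your Step 1 is sound, and Step 2 for real-valued semisimple characters with $n$ odd is indeed covered by \cite{GoVi08}. But Step 3, which is the whole content of part (2), rests on the unproven assertion $\tilde\chi_{\pm}(y\tau)^2=\omega_{\chi}(\beta I)\,q$. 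This is not available in the literature you cite: \cite[Theorem 6.4]{GoVi08} gives only the implication "$\chi$ semisimple symplectic $\Rightarrow$ $\psi(y\tau)$ is nonzero purely imaginary," i.e.\ it presupposes knowledge of $\ep(\chi)$ rather than determining it, and the sharper statement $\psi(y\tau)=\pm\sqrt{-q}$ is precisely the part of the conjecture of \cite{GoVi08} that is recorded as open in this paper's introduction (the paper even says its main count, Theorem \ref{MainThm}, is intended as an ingredient toward proving it). So your "expected conclusion" is essentially the open problem, and since Theorem \ref{MainThm} is proved here \emph{using} Theorem \ref{FScentral}, importing that expected value would also risk circularity. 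Until you actually carry out the coset-trace computation and extract the sign of $\tilde\chi_{\pm}(y\tau)^2$ in terms of $\omega_\chi(\beta I)$ (including the $\omega_\chi(-I)$ versus $\omega_\chi(\beta I)$ issue you correctly identify when $q\equiv 3\pmod 4$), part (2) is not proved.

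Secondary, but still real, gaps: the extension of Step 2 to regular characters and to $q$ even is asserted rather than argued. The claim that Alvis--Curtis duality preserves Frobenius--Schur indicators (and hence lets you pass from semisimple to regular characters) is itself one of the main results of \cite{Vi09} --- the very paper whose theorem you are trying to reprove --- so it cannot simply be invoked as known without proof in your framework; and for $q$ even you would need to verify that a suitable element of $U_n\tau$ squaring to a regular unipotent exists for all $n$ and that the analogue of \cite[Theorem 6.3]{GoVi08} goes through. The actual proof in \cite{Vi09} avoids the coset computation entirely, working instead with duality, Gelfand--Graev multiplicities, and central characters inside $U_n$ itself, which is why it yields part (2) without needing the conjectural value of $\psi(y\tau)$.
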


So, $\U(n, \fifisq)$ only has semisimple (or regular) symplectic characters when $n$ is even and $q$ is odd.  In the main result, we count the number of semisimple symplectic characters of $\U(n, \fifisq)$, which is the same as the number of regular symplectic characters of $\U(n, \fifisq)$.  In fact, as proven in \cite[Corollary 3.1]{Vi09}, if $G$ is a finite group of Lie type defined over $\fifi$, which comes from a connected reductive group over $\fifibar$ with connected center, then the number of semisimple symplectic (respectively, orthogonal) characters of $G$ is equal to the number of regular symplectic (respectively, orthogonal) characters of $G$.

\subsection{Unipotent characters} \label{Uni}

A {\em unipotent character} of $\U(n, \fifisq)$ is an irreducible character which appears as a component of a Deligne-Lusztig character of the form $R_{T_{\nu}, {\bf 1}}$, when it is decomposed into a linear combination of irreducible characters.  In this section, we use unipotent characters to show that the results in Theorem \ref{FScentral} on Frobenius-Schur indicators do not hold in general for all characters of $\U(n, \fifisq)$, expanding on the remark made immediately after the proof of \cite[Theorem 5.2]{Vi09}. 

Consider a Deligne-Lusztig character of the form $R_{T_{\nu}, {\bf 1}}$, where $\nu \in \cP_n$.  From Equation (\ref{DLcorresp}), we have 
$$ \btau_{\nu, {\bf 1}}^{(\varphi)} = \left\{ \begin{array}{ll} \nu & \text{ if } \varphi = \{ {\bf 1} \}, \\ \varnothing & \text{ otherwise.} \end{array} \right. $$
From Theorem \ref{Characters}, we have
\begin{equation} \label{DLcharmap}
\ch(R_{T_{\nu}, {\bf 1}}) = (-1)^{n + \ell(\nu)} p_{\nu}(Y^{( \{ {\bf 1} \} )}).
\end{equation}
Also from Theorem \ref{Characters}, to expand the Deligne-Lusztig character $R_{T_{\nu}, {\bf 1}}$ as a linear combination of irreducible characters of $\U(n, \fifisq)$ is equivalent to expanding the right-hand side of (\ref{DLcharmap}) into a linear combination of Schur functions.  We can write a power-sum symmetric function as a linear combination of Schur functions using the characters of the symmetric group, as in \cite[I.7.8]{Mac}, and doing so for (\ref{DLcharmap}), we obtain Schur functions of the form $s_{\lambda}(Y^{( \{ {\bf 1} \} )})$, where $\lambda \in \cP_n$.  As we vary $\nu \in \cP_n$, we will obtain a Schur function occurring with nonzero coefficient for every $\lambda \in \cP_n$, since by \cite[I.7.8]{Mac}, this is equivalent to the fact that for every conjugacy class of the symmetric group $S_n$, there is an irreducible character of $S_n$ which is nonzero on that conjugacy class.

It follows that the unipotent characters of $\U(n, \fifisq)$ are exactly of the form $\chi^{\blam}$, where $\blam^{(\varphi)} = \varnothing$ when $\varphi \neq \{ {\bf 1} \}$.  In this way, the unipotent characters are parametrized by partitions $\lambda \in \cP_n$, and we write $\chi^{\lambda}$ for the unipotent character of the form $\chi^{\blam}$, where $\blam^{ ( \{ {\bf 1} \} ) } = \lambda$.

Ohmori \cite{Ohm2} has completely determined the Frobenius-Schur indicators of the unipotent characters of $\U(n, \fifisq)$, and it is this result which we will apply.  We first prove a lemma which allows us to write Ohmori's result in a different form.  Given a partition $\lambda \in \cP_n$, the {\em $2$-core} of $\lambda$ is the partition $\tilde{\lambda} \subset \lambda$ of smallest size contained in $\lambda$, such that the skew partition $\lambda/ \tilde{\lambda}$ may be tiled by dominoes.  Equivalently, the $2$-core of $\lambda$ is the partition which remains after removing all possible {\em rim $2$-hooks} from the diagram for $\lambda$ (see \cite[Section 2.7]{JaKe81}).  The nonempty $2$-cores are exactly the stairstep partitions, which are partitions of the form $(k, k-1, \ldots, 2, 1)$.  Let $c_2(\lambda)$ denote the $2$-core of $\lambda$.  We have the following.

\begin{lemma} \label{2corelemma} Let $\lambda$ be a partition of $n$.  Let ${\rm ohl}(\lambda)$ and ${\rm ehl}(\lambda)$ denote the number of odd and even hook-lengths of $\lambda$, respectively.  Then,
$$ {\rm ohl}(\lambda) - {\rm ehl}(\lambda) = |c_2(\lambda)|.$$
\end{lemma}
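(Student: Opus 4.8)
The plan is to prove the identity $\mathrm{ohl}(\lambda) - \mathrm{ehl}(\lambda) = |c_2(\lambda)|$ by tracking how both sides change as we remove rim $2$-hooks (dominoes) from the diagram of $\lambda$, and then verifying the identity directly on the terminal object, a $2$-core. First I would recall from \cite[Section 2.7]{JaKe81} that repeatedly stripping rim $2$-hooks from $\lambda$ terminates in the $2$-core $c_2(\lambda)$, and that the number of dominoes removed is $(n - |c_2(\lambda)|)/2$, independent of the order of removal. So it suffices to show that (a) removing a single rim $2$-hook decreases $\mathrm{ohl}(\lambda) - \mathrm{ehl}(\lambda)$ by exactly $2$, and (b) for a $2$-core $\mu = (k, k-1, \ldots, 2, 1)$ we have $\mathrm{ohl}(\mu) - \mathrm{ehl}(\mu) = |\mu| = k(k+1)/2$.

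For step (a), the key combinatorial fact is the standard bijection between the multiset of hook lengths of $\lambda$ and the "beta-set" (first-column hook lengths, or equivalently the set of "beads" on an abacus): the hook lengths of $\lambda$ are precisely the differences that arise from the beta-set, and removing a rim $2$-hook corresponds to sliding one bead down by two positions on a single runner of a $2$-runner abacus. Under this correspondence, I would compare the hook-length multisets before and after the move. Sliding a bead from position $b+2$ to position $b$ on the abacus removes from the hook-length multiset exactly the two values $\{b - a : a \notin \text{beta-set},\ a < b\} \cup \ldots$ — more precisely, one checks that the two hook lengths that disappear and the one hook length (equal to $2$, the size of the removed hook) together with the bookkeeping of the remaining cells change the count $\mathrm{ohl} - \mathrm{ehl}$ by exactly $-2$. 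The cleanest way to see the net effect is: the removed domino is itself a rim hook of hook length $2$ (even), and the other hook lengths in row/column through the two removed cells each drop by $2$ (so their parity is unchanged), except that two of the hook-arms get truncated; a short case analysis on these two affected cells shows the difference $\mathrm{ohl} - \mathrm{ehl}$ drops by precisely $2$.

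For step (b), a $2$-core $\mu = (k, k-1, \ldots, 1)$ has the property that every hook length is odd — indeed its hook lengths are exactly $\{1, 3, 5, \ldots, 2k-1\}$ with the odd number $2j-1$ appearing with multiplicity $k+1-j$, which one verifies by a direct inspection of the staircase diagram (this is why staircases are the $2$-cores in the first place: no even hook length means no domino can be removed). Hence $\mathrm{ehl}(\mu) = 0$ and $\mathrm{ohl}(\mu) = \sum_{j=1}^{k}(k+1-j) = k(k+1)/2 = |\mu|$, giving $\mathrm{ohl}(\mu) - \mathrm{ehl}(\mu) = |c_2(\mu)| = |\mu|$. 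Combining (a), (b), and the fact that $(n - |c_2(\lambda)|)/2$ dominoes are removed, we get $\mathrm{ohl}(\lambda) - \mathrm{ehl}(\lambda) = |c_2(\lambda)| + 2 \cdot (n - |c_2(\lambda)|)/2 = n - |c_2(\lambda)| + \ldots$ — wait, I must orient the induction correctly: removing a domino takes the left side up by $2$ as we go from the smaller partition to $\lambda$, so $\mathrm{ohl}(\lambda) - \mathrm{ehl}(\lambda) = |c_2(\lambda)|$ follows since at each of the $(n-|c_2(\lambda)|)/2$ removal steps the quantity drops by $2$ down to the value $|c_2(\lambda)|$ at the core, i.e. $\mathrm{ohl}(\lambda)-\mathrm{ehl}(\lambda) - 2\cdot(\text{number of dominoes}) \cdot 0$ — the correct bookkeeping is simply that the invariant $\big(\mathrm{ohl}(\lambda) - \mathrm{ehl}(\lambda)\big) - |c_2(\lambda)|$ is unchanged under domino removal (it is $0$ minus $0$... no): I would phrase it as the quantity $\mathrm{ohl}(\lambda) - \mathrm{ehl}(\lambda)$ equals $|c_2(\lambda)|$ because it decreases by $2$ with each domino removal while $|c_2(\lambda)|$ stays fixed and the two sides agree at the core.

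The main obstacle I anticipate is step (a): making the effect of a single rim-$2$-hook removal on the hook-length multiset fully precise. The subtlety is that removing a domino does not merely delete two hook lengths and decrement others by $2$ in a uniform way — the arm and leg lengths of cells lying in the "hook region" of the removed domino get redistributed, and one must argue carefully (best done via the abacus/beta-set model, where a domino removal is the local move "bead at position $p$, hole at position $p-2$ on one runner $\to$ swap them") that the *net* change in the signed count of odd versus even hook lengths is exactly $-2$. An alternative, possibly cleaner, route avoiding case analysis: use the generating-function identity $\sum_{\Box \in \lambda} t^{h(\Box)} \equiv \ldots \pmod{\text{something}}$, or more directly the known fact (e.g.\ from the theory of $t$-cores) that $\#\{\Box : 2 \mid h(\Box)\} = (|\lambda| - |c_2(\lambda)|)/2 + \#\{\Box \in c_2(\lambda): 2 \mid h(\Box)\}$ and that the last term is $0$ for a staircase; then $\mathrm{ehl}(\lambda) = (n - |c_2(\lambda)|)/2$, $\mathrm{ohl}(\lambda) = n - \mathrm{ehl}(\lambda) = (n + |c_2(\lambda)|)/2$, and subtracting gives $|c_2(\lambda)|$ immediately. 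I would present the proof via whichever of these is shortest to make rigorous, most likely the abacus model, and relegate the parity bookkeeping to a short explicit computation.
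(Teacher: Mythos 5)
Your primary route contains a genuine error in its key step (a). The claim that removing a single rim $2$-hook decreases $\mathrm{ohl}(\lambda)-\mathrm{ehl}(\lambda)$ by exactly $2$ is false: the correct local fact is that each rim $2$-hook removal deletes exactly one even and one odd hook length from the count, so that $\mathrm{ehl}$ and $\mathrm{ohl}$ each drop by $1$ and the difference $\mathrm{ohl}-\mathrm{ehl}$ is \emph{invariant} under domino removal. You can see the claimed decrement cannot be right from the identity you are trying to prove: if the quantity dropped by $2$ at each of the $w_2(\lambda)$ removal steps and equalled $|c_2(\lambda)|$ at the core, then at $\lambda$ it would equal $|c_2(\lambda)|+2w_2(\lambda)=|\lambda|$, forcing $\mathrm{ehl}(\lambda)=0$ for every $\lambda$, which already fails for $\lambda=(2)$ (hook lengths $2,1$, so $\mathrm{ohl}-\mathrm{ehl}=0=|c_2((2))|$). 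Your closing paragraph senses this tension but never resolves it, and as written it is self-contradictory ("decreases by $2$ with each domino removal ... and the two sides agree at the core" cannot yield agreement at $\lambda$). The fix is simply to replace (a) by the invariance statement; combined with your (correct) step (b) that a staircase has all hook lengths odd, the induction then closes. Proving the corrected (a) from scratch still requires the abacus or a careful rim-hook analysis, which is the real content you would need to supply.

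Your fallback route at the end is sound and is essentially the paper's proof: the paper notes $\mathrm{ohl}(\lambda)+\mathrm{ehl}(\lambda)=|\lambda|$ and $|\lambda|=|c_2(\lambda)|+2w_2(\lambda)$, and then cites \cite[2.7.40]{JaKe81} for the fact $w_2(\lambda)=\mathrm{ehl}(\lambda)$ (equivalently, your statement that $\mathrm{ehl}(\lambda)=(|\lambda|-|c_2(\lambda)|)/2$, since $2$-cores have no even hook lengths), from which the identity follows by subtraction. So if you invoke that standard result as a citation rather than reproving it, your alternative paragraph is a complete argument and coincides with the paper's; your first route, once step (a) is corrected to the invariance of $\mathrm{ohl}-\mathrm{ehl}$, amounts to reproving that same James--Kerber fact by induction on domino removals.
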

\begin{proof}  We note that the claim is equivalent to 
\begin{equation} \label{claim}
|\lambda| = |c_2(\lambda)| + 2({\rm ehl}(\lambda)),
\end{equation} 
since ${\rm ohl}(\lambda) + {\rm ehl}(\lambda) = |\lambda|$.  The {\em $2$-weight} of $\lambda$, which we denote $w_2(\lambda)$, is the number of rim $2$-hooks which have to be removed from $\lambda$ in order to get the $2$-core of $\lambda$ (see \cite[Section 2.7]{JaKe81}).  So, $|\lambda| = |c_2(\lambda)| + 2w_2(\lambda)$.  From \cite[2.7.40]{JaKe81}, we have $w_2(\lambda) = {\rm ehl}(\lambda)$, giving (\ref{claim}).
\end{proof}

Using Lemma \ref{2corelemma}, we may now state the result of Ohmori \cite[Corollary]{Ohm2} in the following form.

\begin{thm}[Ohmori] \label{ohmori} Let $\chi^{\lambda}$ be a unipotent character of ${\rm U}(n, \fifisq)$ corresponding to the partition $\lambda$ of $n$.  Then,
$$ \ep(\chi^{\lambda}) = (-1)^{\lfloor |c_2(\lambda)|/2 \rfloor}.$$
\end{thm}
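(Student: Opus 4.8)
The plan is to derive the stated formula directly from Ohmori's original computation of the Frobenius-Schur indicators of the unipotent characters of $\U(n,\fifisq)$ in \cite[Corollary]{Ohm2}, using Lemma~\ref{2corelemma} to repackage the answer. No additional representation theory is needed; the content is purely a combinatorial translation.

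First I would record Ohmori's result in its original shape, in which the sign $\ep(\chi^{\lambda})$ is expressed through the hook lengths of $\lambda$ --- that is, as $(-1)^{e(\lambda)}$ where the exponent $e(\lambda)$ is a function of the number $\mathrm{ehl}(\lambda)$ of even hook lengths (equivalently of $\mathrm{ohl}(\lambda) = |\lambda| - \mathrm{ehl}(\lambda)$, or of the $2$-weight $w_2(\lambda) = \mathrm{ehl}(\lambda)$ via \cite[2.7.40]{JaKe81}). Next I would substitute the two identities $\mathrm{ohl}(\lambda) + \mathrm{ehl}(\lambda) = |\lambda|$ and, from Lemma~\ref{2corelemma}, $\mathrm{ohl}(\lambda) - \mathrm{ehl}(\lambda) = |c_2(\lambda)|$. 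These solve for $\mathrm{ohl}(\lambda)$ and $\mathrm{ehl}(\lambda)$ purely in terms of $|\lambda|$ and $|c_2(\lambda)|$; plugging into $e(\lambda)$ and reducing modulo $2$ should produce the exponent $\lfloor |c_2(\lambda)|/2 \rfloor$, giving $\ep(\chi^{\lambda}) = (-1)^{\lfloor |c_2(\lambda)|/2 \rfloor}$. Here one uses that $|\lambda|$ and $|c_2(\lambda)|$ have the same parity, since their difference $2 w_2(\lambda)$ is even, so that the floors behave additively and the reduction is unambiguous.

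The only real obstacle is careful bookkeeping: one must transcribe Ohmori's sign exactly --- in particular whether it carries an extra factor such as $(-1)^{\lfloor n/2\rfloor}$ --- and then verify the mod-$2$ equality of the two exponents. A reliable way to pin this down is to test the proposed formula on the nonempty $2$-cores $\lambda = (k, k-1, \ldots, 2, 1)$, for which $\mathrm{ehl}(\lambda) = 0$ and $\mathrm{ohl}(\lambda) = |\lambda| = |c_2(\lambda)| = k(k+1)/2$, together with a couple of small non-core examples such as $\lambda = (2)$ and $\lambda = (1,1)$; these already determine $e(\lambda)$ once its dependence on $|\lambda|$ and $\mathrm{ehl}(\lambda)$ is known. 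Beyond this transcription step, the argument is routine.
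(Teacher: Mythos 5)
Your proposal follows the same route as the paper: the paper gives no independent argument for this theorem, but simply cites Ohmori's \cite[Corollary]{Ohm2} as the representation-theoretic input and uses Lemma~\ref{2corelemma} (via ${\rm ohl}(\lambda)+{\rm ehl}(\lambda)=|\lambda|$, ${\rm ohl}(\lambda)-{\rm ehl}(\lambda)=|c_2(\lambda)|$, and $w_2(\lambda)={\rm ehl}(\lambda)$ from \cite[2.7.40]{JaKe81}) to rewrite the sign as $(-1)^{\lfloor |c_2(\lambda)|/2\rfloor}$, exactly the translation you describe. Your remaining step of transcribing Ohmori's exponent precisely and checking parities is the same bookkeeping the paper leaves implicit, so the approaches coincide.
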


We now use Theorem \ref{ohmori} to see that the results in Theorem \ref{FScentral} do not extend in general to all real-valued characters of ${\rm U}(n, \fifisq)$.  Note that from Theorem \ref{CentralValues}, if $\chi^{\lambda}$ is a unipotent character of ${\rm U}(n, \fifisq)$, then the central character $\omega_{\chi^{\lambda}}$ of $\chi^{\lambda}$ is trivial, so $\omega_{\chi^{\lambda}}(z) = 1$ for all central elements $z$ of ${\rm U}(n, \fifisq)$.  

Consider the case when $n$ is even and $q$ is odd, and let $n = 2m \geq 6$.  Consider the partition $\lambda = (n-3, 2, 1)$.  Then, the $2$-core of $\lambda$ is obtained by removing $n-6$ squares from the first row of $\lambda$, so that $c_2(\lambda) = (3,2,1)$.  So, by Theorem \ref{ohmori}, $\ep(\chi^{\lambda}) = -1$, while the central character of $\chi^{\lambda}$ takes the value $1$ for every central element.  In particular, $\ep(\chi^{\lambda}) \neq \omega_{\chi^{\lambda}} (\beta I)$, where $\beta$ is a multiplicative generator of $T_1$.  Similarly, in the case that $n \geq 3$ is odd, if $\lambda = (n-1, 1)$, then $\chi^{\lambda}$ of $\U(n, \fifisq)$ is such that $\ep(\chi^{\lambda}) = -1$.

\section{Self-dual polynomials} \label{SelfDual}

Let $K$ be any field such that ${\rm char}(K) \neq 2$, and fix an algebraic closure $\bar{K}$ of $K$.  A polynomial $g(x) \in K[x]$ is called a {\em self-dual polynomial} if it is monic, non-constant, has non-zero constant term, and has the property that for any $\alpha \in \bar{K}^{\times}$, $\alpha$ is a root of $g(x)$ with multiplicity $m$ if and only if $\alpha^{-1}$ is a root of $g(x)$ with multiplicity $m$.  See \cite[Section 1]{Wo66} for the basic results on self-dual polynomials which we now state.  If $h(x) \in K[x]$ is a monic, non-constant polynomial with nonzero constant term, define $\tilde{h}(x) \in K[x]$ by
\begin{equation} \label{DualPoly}
\tilde{h}(x) = h(0)^{-1} x^d h(1/x), \quad \text{ where } \quad d ={\rm deg}(h(x)).
\end{equation}
Then $h(x)$ is a self-dual polynomial if and only if $h(x) = \tilde{h}(x)$.  Note that the only irreducible self-dual polynomials of odd degree are $x+1$ and $x-1$.  Any self-dual polynomial $g(x) = \tilde{g}(x)$ may be written as a product
\begin{equation} \label{DualProd}
g(x) = (x-1)^s (x+1)^t \prod_{i=1}^k (v_i(x) \tilde{v}_i(x))^{n_i} \prod_{j=1}^l r_j(x)^{m_j},
\end{equation}
where each $v_i(x), r_j(x) \in K[x]$ is irreducible over $K$, $v_i(x) \neq \tilde{v}_i(x)$, $r_j(x) = \tilde{r}_j(x)$, and $r_j(x) \neq x \pm 1$.  Note also that any self-dual $g(x) \in K[x]$ has constant term $\pm 1$, and the constant term is $1$ exactly when $s$ in the product (\ref{DualProd}) is even.

We now consider the case when $K= \fifi$, where $q$ is odd.  For any $\alpha \in \fifibar^{\times}$, if $d$ is the smallest non-negative integer such that $\alpha^{q^d} = \alpha$, then the polynomial
$$ h(x) = (x- \alpha)(x - \alpha^q)(x- \alpha^{q^2}) \cdots (x - \alpha^{q^{d-1}}),$$
is an irreducible polynomial in $\fifi[x]$, and every non-constant irreducible polynomial in $\fifi[x]$ with non-zero constant term is of this form.  Note that $h(x)$ is self-dual if and only if we have $\{ \alpha, \cdots, \alpha^{q^{d-1}} \} = \{ \alpha^{-1}, \cdots, \alpha^{-q^{d-1}} \}$, and if these sets are not equal, then we have
$$ h(x) \tilde{h}(x) = \prod_{i = 0}^{d-1} (x - \alpha^{q^i})(x - \alpha^{-q^{i}}),$$
is a self-dual polynomial.  The factorization in (\ref{DualProd}) says that these are essentially the only ways to get self-dual factors of self-dual polynomials.  We will say that these two types of polynomials are {\em irreducibly self-dual polynomials}, so that (\ref{DualProd}) gives the factorization of any self-dual polynomial into a product of irreducibly self-dual polynomials. 

Let $\alpha \in \fifibar^{\times}$, and let $d$ be the smallest non-negative integer such that $\alpha^{q^d} = \alpha$.  Note that we have
\begin{equation} \label{orbits}
[\alpha] \cup [\alpha^{-1}] = \{ \alpha, \ldots, \alpha^{q^{d-1}} \} \cup \{ \alpha^{-1}, \ldots, \alpha^{-q^{d-1}} \},
\end{equation}
where $[\alpha]$ is the $F$-orbit of $\alpha$, as in Section \ref{Orbits}.  It follows that every irreducibly self-dual polynomial in $\fifi[x]$ is of the form
$$ \prod_{ \gamma \in [\alpha] \cup [\alpha^{-1}]} (x - \gamma),$$
where $[\alpha] \in \Phi$.  From this observation, and the factorization in (\ref{DualProd}), it follows that self-dual polynomials in $\fifi[x]$ of degree $n$ are in one-to-one correspondence with elements $\bmu \in \cP^{\Phi}_n$ such that, for every $f \in \Phi$, $\bmu^{(f)} = (1^{m_f})$ for some $m_f \geq 0$, and $\bmu^{(f)} = \bmu^{(\bar{f})}$, where if $f = [\alpha]$, then $\bar{f} = [\alpha^{-1}]$.  Let $\cS_{\Phi, n}$ denote this subset of $\cP_n^{\Phi}$.  We define a bijection $\rho$ from $\cS_{\Phi, n}$ to the set of self-dual polynomials of degree $n$ in $\fifi[x]$ by 
\begin{equation} \label{rho} 
\rho:  \bmu \mapsto \prod_{f \in \Phi} \prod_{\gamma \in f} (x + \gamma)^{m_f}.
\end{equation}

Now consider the set of real-valued semisimple characters of $\U(n, \fifisq)$.  From Theorem \ref{RealChars} and (\ref{semisimple}), these are the characters of the form $\chi^{\blam}$ with $\blam \in \cP_n^{\Theta}$ where, for every $\varphi \in \Theta$, $\blam^{(\varphi)} = (1^{m_{\varphi}})$ for some $m_{\varphi} \geq 0$, and $\blam = \bar{\blam}$.  Let $\cS_{\Theta, n}$ denote this subset of $\cP_n^{\Theta}$ corresponding to real-valued semisimple characters of $\U(n, \fifisq)$.  Recall the bijection $\Delta: \cP_n^{\Theta} \rightarrow \cP_n^{\Phi}$ defined in Section \ref{Conj}.  It follows directly from the definition of $\Delta$ that if $\blam^{(\varphi)} = \blam^{(\bar{\varphi})}$ for every $\varphi \in \Theta$, then $\Delta(\blam)^{(f)} = \Delta(\blam)^{(\bar{f})}$ for every $f \in \Phi$.  Therefore, $\Delta$ gives a bijection from $\cS_{\Theta, n}$ to $\cS_{\Phi, n}$.  We summarize these observations as follows.
\begin{prop} \label{Semibijection}
For each real-valued semisimple character of $\U(n, \fifisq)$, map the corresponding $\blam \in \cS_{\Theta, n}$ to $\rho(\Delta(\blam))$, which is a self-dual polynomial in $\fifi[x]$ of degree $n$.  This map is a bijection, that is, the map
$$ \chi^{\blam} \mapsto \blam \mapsto \Delta(\blam) = \bmu \mapsto \rho(\bmu)$$  
gives a bijection from the set of real-valued semisimple characters of $\U(n, \fifisq)$ to the set of degree $n$ self-dual polynomials in $\fifi[x]$.
\end{prop}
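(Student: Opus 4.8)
The plan is to exhibit the displayed map as a composite of three bijections, each of which has been set up (or all but set up) earlier in the paper, so that the proof amounts to assembling those pieces and checking one compatibility.

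First I would record that $\blam \mapsto \chi^{\blam}$ is a bijection from $\cP_n^{\Theta}$ onto the irreducible characters of $\U(n,\fifisq)$ by Theorem \ref{Characters}. Under this bijection, Theorem \ref{RealChars} identifies the real-valued characters with the $\blam$ satisfying $\bar{\blam} = \blam$, and (\ref{semisimple}) identifies the semisimple characters with the $\blam$ for which every $\blam^{(\varphi)}$ is of the form $(1^{m_{\varphi}})$. Imposing both conditions at once, $\chi^{\blam} \mapsto \blam$ restricts to a bijection from the set of real-valued semisimple characters of $\U(n,\fifisq)$ onto $\cS_{\Theta,n}$.

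Next I would verify that $\Delta$ restricts to a bijection $\cS_{\Theta,n} \to \cS_{\Phi,n}$. Because $\Delta$ merely relabels the component partitions of a multi-partition along the orbit bijection $\tilde{\partial}$, it preserves the property that every component partition is of the form $(1^m)$, so it respects that condition. The point needing a moment's care is that $\Delta$ intertwines the involution $\blam \mapsto \bar{\blam}$ on $\cP_n^{\Theta}$ with the involution $\bmu \mapsto \bmu^{\vee}$, $(\bmu^{\vee})^{(f)} = \bmu^{(\bar{f})}$, on $\cP_n^{\Phi}$; for this I would observe that the isomorphism $\partial \colon T_{n!} \to \widehat{T}_{n!}$ satisfies $\partial(a^{-1}) = \overline{\partial(a)}$, since complex conjugation of a character is inversion in $\widehat{T}_{n!}$, hence $\tilde{\partial}$ sends the orbit involution $[a] \mapsto [a^{-1}]$ to $\varphi \mapsto \bar{\varphi}$, and then (\ref{bijection}) yields $\overline{\Delta(\blam)} = \Delta(\bar{\blam})$. (This is in effect the observation already recorded just before the statement.) Combining, $\Delta$ carries $\cS_{\Theta,n}$, the set of $\blam$ with $\bar{\blam}=\blam$ and every $\blam^{(\varphi)}$ of the form $(1^m)$, bijectively onto $\cS_{\Phi,n}$.

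Finally, $\rho$ from (\ref{rho}) is a bijection from $\cS_{\Phi,n}$ onto the degree-$n$ self-dual polynomials in $\fifi[x]$: by the factorization (\ref{DualProd}) and the orbit identity (\ref{orbits}), such a polynomial is exactly a product $\prod_{f} \prod_{\gamma \in f} (x - \gamma)^{m_f}$ over the $F$-orbits $f \in \Phi$ with multiplicities satisfying $m_f = m_{\bar{f}}$, and passing from $x-\gamma$ to $x+\gamma$ as in (\ref{rho}) only composes with the bijection $\gamma \mapsto -\gamma$ of $\fifibar^{\times}$, which commutes with $F$ and with inversion when $q$ is odd. Composing the three bijections $\chi^{\blam} \mapsto \blam$, $\blam \mapsto \Delta(\blam)$, and $\bmu \mapsto \rho(\bmu)$ gives the assertion. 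I do not anticipate a real obstacle: the only step that is more than bookkeeping is the compatibility of $\Delta$ (equivalently of $\tilde{\partial}$) with conjugation, and that has essentially been carried out in the paragraph preceding the statement; everything else is a repackaging of Theorems \ref{Characters} and \ref{RealChars}, the characterization (\ref{semisimple}), and the elementary structure of self-dual polynomials from Section \ref{SelfDual}.
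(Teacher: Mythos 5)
Your proof is correct and follows exactly the route the paper takes: the proposition is stated as a summary of the preceding observations, and you have simply spelled them out, supplying the one detail the paper calls ``direct from the definition'' (that $\partial(a^{-1})=\overline{\partial(a)}$, so $\tilde{\partial}$ intertwines $[a]\mapsto[a^{-1}]$ with $\varphi\mapsto\bar\varphi$ and hence $\Delta$ carries $\cS_{\Theta,n}$ onto $\cS_{\Phi,n}$). The remark that passing from $x-\gamma$ to $x+\gamma$ composes with the $F$- and inversion-equivariant map $\gamma\mapsto-\gamma$ is a correct justification that $\rho$ as defined is still a bijection, consistent with the paper's treatment.
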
  

From the definition of the bijection $\rho \circ \Delta$ from $\cS_{\Theta, n}$ to the set of degree $n$ self-dual polynomials in $\fifi[x]$, for $\blam \in \cS_{\Theta, n}$, each partition $\blam^{(\varphi)} = (1^{m_{\varphi}})$ corresponds to a factor $\prod_{\gamma \in f} (x + \gamma)^{m_{\varphi}}$, where $\tilde{\partial}(f) = \varphi$ and $m_f = m_{\varphi}$, and $\tilde{\partial}$ is as defined in Section \ref{Conj}.  The choice of defining the map $\rho$ in (\ref{rho}) with factors in the form $(x+ \gamma)$ instead of $(x-\gamma)$ is so that we have the following fact, which is key in the counting argument in the proof of our main result.

\begin{prop} \label{sigmacount}
Let $\blam \in \cS_{\Theta, n}$, and let $\sigma \in L$ be the character as defined in Section \ref{Orbits}.  Through the bijection $\rho \circ \Delta$ from $\cS_{\Theta, n}$ to the set of degree $n$ self-dual polynomials in $\fifi[x]$, the partition $\blam^{( \{ \sigma \} )} = (1^{m_{\sigma}})$ corresponds to the factor $(x-1)^{m_{\sigma}}$.  In particular, the self-dual polynomial $(\rho \circ \Delta)(\blam)$ has constant term $(-1)^{m_{\sigma}}$.
\end{prop}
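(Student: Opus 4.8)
The plan is to trace the single factor of $(\rho\circ\Delta)(\blam)$ that comes from $\blam^{(\{\sigma\})}$ through the two bijections, so the whole proposition reduces to identifying which $F$-orbit $f\in\Phi$ satisfies $\tilde{\partial}(f)=\{\sigma\}$, where $\tilde{\partial}$ is the map of Section \ref{Conj}. I claim that orbit is the singleton $\{-1\}$, and everything else is bookkeeping.

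First I would check that $-1\in\fifibar^{\times}$ lies in $T_{n!}$ and is fixed by $F$: since $q$ is odd, $F(-1)=(-1)^{-q}=-1$, so $-1\in T_{n!}$ and $\{-1\}$ is a singleton $F$-orbit in $\Phi$. As $T_{n!}$ is cyclic of order $q^{n!}-(-1)^{n!}$, which is even because $q$ is odd, $-1$ is its unique element of order $2$. On the dual side, when $\sigma\in L_1=\widehat{T}_1$ is embedded in $L_{n!}=\widehat{T}_{n!}$ via $N_{n!,1}^{\star}$ it becomes $\sigma\circ N_{n!,1}$, which is a nontrivial character of order $2$ because the norm map $N_{n!,1}$ is surjective; equivalently, by the remark in Section \ref{Orbits}, $\sigma(\tilde{\beta})_{n!}=-1$ for a generator $\tilde{\beta}$ of $T_{n!}$. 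Since $\widehat{T}_{n!}$ is also cyclic of the same even order, $\sigma$ is its unique character of order $2$. Because $\partial\colon T_{n!}\to\widehat{T}_{n!}$ is a group isomorphism, it must carry the unique order-$2$ element to the unique order-$2$ character, so $\partial(-1)=\sigma$, and therefore $\tilde{\partial}(\{-1\})=\{\sigma\}$.

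With this identification, writing $\bmu=\Delta(\blam)$, the defining relation $\bmu^{([a])}=\blam^{(\tilde{\partial}([a]))}$ of (\ref{bijection}) gives $\bmu^{(\{-1\})}=\blam^{(\{\sigma\})}=(1^{m_{\sigma}})$, i.e. $m_{\{-1\}}=m_{\sigma}$ in the notation of (\ref{rho}). Applying $\rho$, the factor of $(\rho\circ\Delta)(\blam)=\prod_{f\in\Phi}\prod_{\gamma\in f}(x+\gamma)^{m_f}$ indexed by $f=\{-1\}$ is exactly $(x+(-1))^{m_{\sigma}}=(x-1)^{m_{\sigma}}$, which is the first assertion. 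For the constant term I would compute it directly as $\prod_{f\in\Phi}\big(\prod_{\gamma\in f}\gamma\big)^{m_f}$ and group the orbits: the orbit $\{-1\}$ contributes $(-1)^{m_{\sigma}}$; the orbit $\{1\}$ contributes $1$; any self-dual orbit $f=\bar{f}$ with $f\neq\{\pm1\}$ is closed under inversion with no fixed point (the only self-inverse elements in $\fifibar^{\times}$ being $\pm1$), so $\prod_{\gamma\in f}\gamma=1$; and for any pair $\{f,\bar{f}\}$ with $f\neq\bar{f}$ the combined contribution is $\prod_{\gamma\in f}\gamma\cdot\prod_{\gamma\in f}\gamma^{-1}=1$. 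Since $\blam\in\cS_{\Theta, n}$ forces $\bmu^{(f)}=\bmu^{(\bar{f})}$ for all $f$, these exhaust all cases, so the constant term is $(-1)^{m_{\sigma}}$. Alternatively, this follows at once from the fact recalled after (\ref{DualProd}) that a self-dual polynomial in $\fifi[x]$ has constant term $(-1)^{s}$ with $s$ the multiplicity of $x-1$, once one observes that multiplicity equals $m_{\sigma}$.

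The main obstacle is the identification $\partial(-1)=\sigma$; once that is in place the rest is routine tracking along the chain $\chi^{\blam}\mapsto\blam\mapsto\Delta(\blam)\mapsto\rho(\Delta(\blam))$. That identification is really just the elementary remark that an isomorphism of cyclic groups matches the unique elements of order $2$, but it is exactly the point of the proposition, and it is the reason the sign convention $(x+\gamma)$ (rather than $(x-\gamma)$) in the definition (\ref{rho}) of $\rho$ is the right one.
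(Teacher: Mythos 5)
Your proof is correct and follows essentially the same route as the paper: the whole point is the identification $\partial(-1)=\sigma$, which the paper gets by evaluating $\partial(\alpha^{|T_{n!}|/2})$ on a generator and you get by the equivalent observation that an isomorphism of cyclic groups of even order must match the unique order-$2$ element with the unique order-$2$ character, after which tracing $\blam^{(\{\sigma\})}$ through $\Delta$ and $\rho$ is the same bookkeeping. Your extra direct verification of the constant term (or the appeal to the fact following (\ref{DualProd})) is a fine, slightly more explicit finish than the paper's.
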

\begin{proof}  It is enough to show that $\tilde{\partial}([-1]) = [\sigma]$, or just that $\partial(-1) = \sigma$, which is the same since $[-1]$ and $[\sigma]$ are singleton $F$-orbits.  Let $\alpha$ be a multiplicative generator for the group $T_{n!}$, and let $\zeta$ be a primitive $|T_{n!}|$-th root of unity.  Then $\alpha^{|T_{n!}|/2} = -1 \in T_{n!}$, and $\zeta^{|T_{n!}|/2} = -1 \in \real$, and so by definition of $\partial$, $\partial(-1)$ is the character in $\widehat{T}_{n!}$ defined by $\alpha \mapsto -1$.  This is exactly the evaluation of $\sigma \in L$ as an element of $\widehat{T}_{n!}$, and thus $\partial(-1) = \sigma$.
\end{proof}

\section{Proof of the main theorem} \label{Main}

In order to calculate the Frobenius-Schur indicator of a real-valued semisimple character of $\U(n, \fifisq)$, by Theorem \ref{FScentral}, we must calculate the value of the central character at $\beta I$, where $\beta$ is a generator for $T_1$.  We may calculate the value of the central character by applying the formula obtained in Theorem \ref{CentralValues}.  In the case of a real-valued character, the formula in Theorem \ref{CentralValues} reduces to a particularly simple form, which we obtain in the next three Lemmas.

\begin{lemma} \label{FirstProdRed} Let $\blam \in \cP_n^{\Theta}$ such that $\bar{\blam} = \blam$.  Define $\omega^*_{\blam} \in L_1$ by
$$\omega^*_{\blam} =  \prod_{ \varphi \in \Theta \atop{\varphi \neq \bar{\varphi} \text{ or } \atop{|\varphi| \text{ even }}}} \prod_{\xi \in \varphi} \xi^{|\blam^{(\varphi)}|}.$$
Then, for any $a \in T_1$, we have $\omega^*_{\blam}(a)_1 = 1$.
\end{lemma}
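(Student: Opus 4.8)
The plan is to split the product defining $\omega^*_{\blam}$ according to whether the orbit $\varphi$ is self-conjugate, and to show each resulting factor (or pair of factors) is the trivial element of $L_1$, so that its value at every $a\in T_1$ is $1$. For $\varphi\in\Theta$ put $k_\varphi=|\blam^{(\varphi)}|$ and $\eta_\varphi=\prod_{\xi\in\varphi}\xi$; since $F$ permutes the elements of $\varphi$ cyclically, $\eta_\varphi$ is $F$-invariant, so $\eta_\varphi\in L_1$ and $\omega^*_{\blam}=\prod_\varphi\eta_\varphi^{\,k_\varphi}$, the product over those $\varphi$ with $\varphi\neq\bar\varphi$ or $|\varphi|$ even. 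Because complex conjugation of characters of a finite abelian group is inversion, $\bar\varphi=\{\bar\xi:\xi\in\varphi\}$ gives $\eta_{\bar\varphi}=\eta_\varphi^{-1}$ in $L_1$, and $\bar\blam=\blam$ gives $k_{\bar\varphi}=k_\varphi$ for every $\varphi$.

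For an orbit with $\varphi\neq\bar\varphi$, the orbit $\bar\varphi$ is distinct from $\varphi$, of the same size, and therefore also indexes a factor of the product; pairing the two, their combined contribution is $\eta_\varphi^{\,k_\varphi}\eta_{\bar\varphi}^{\,k_\varphi}=\eta_\varphi^{\,k_\varphi}\eta_\varphi^{-k_\varphi}={\bf 1}$. So it remains only to prove that $\eta_\varphi={\bf 1}$ in $L_1$ whenever $\varphi=\bar\varphi$ and $d:=|\varphi|$ is even; then $\eta_\varphi^{\,k_\varphi}={\bf 1}$ as well, and hence $\omega^*_{\blam}={\bf 1}$.

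Fix such a $\varphi$ (necessarily $d\geq 2$), choose $\xi\in\varphi$, fix a generator $\beta$ of $T_d$, and put $\zeta=\xi(\beta)$, a root of unity of order $N=\mathrm{ord}(\xi)$. Since the characters of $T_d$ of order at most $2$ are $F$-fixed (as $q+1$ is even, $\alpha^{-(q+1)}$ is always a square), $d\geq 2$ forces $N\geq 3$, so $\xi\neq\xi^{-1}$; from $\varphi=\bar\varphi$ we get $\xi^{-1}=F^j\xi$ for a unique $j$ with $0<j<d$, and applying $F^j$ again gives $\xi=F^{2j}\xi$, hence $d\mid 2j$ and $j=d/2$. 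Evaluating $\xi^{-1}=F^{d/2}\xi$ at $\beta$ yields $\zeta^{\,(-q)^{d/2}+1}=1$, that is, $N\mid(-q)^{d/2}+1$. Now the element $\eta_\varphi\in L_1$ is, by definition, the character $\eta$ of $T_1$ with $\eta\circ N_{d,1}=\prod_{\xi'\in\varphi}\xi'$ as characters of $T_d$; since the norm $N_{d,1}\colon T_d\to T_1$ is surjective it carries $\beta$ to a generator $N_{d,1}(\beta)$ of $T_1$, so
$$\eta\bigl(N_{d,1}(\beta)\bigr)=\prod_{\xi'\in\varphi}\xi'(\beta)=\zeta^{\,\sum_{i=0}^{d-1}(-q)^i}.$$
Since $d$ is even, $\sum_{i=0}^{d-1}(-q)^i=\bigl(1+(-q)^{d/2}\bigr)\sum_{i=0}^{d/2-1}(-q)^i$ is divisible by $(-q)^{d/2}+1$, hence by $N$, so the exponent kills $\zeta$ and $\eta={\bf 1}$, i.e.\ $\eta_\varphi={\bf 1}$ in $L_1$. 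Combining with the previous paragraph, $\omega^*_{\blam}={\bf 1}$, and in particular $\omega^*_{\blam}(a)_1=1$ for every $a\in T_1$.

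The conjugate-orbit pairing is routine; the main obstacle is the self-conjugate even case, namely pinning down that $\varphi=\bar\varphi$ with $|\varphi|$ even forces $\xi^{-1}=F^{d/2}\xi$ — and hence the divisibility $\mathrm{ord}(\xi)\mid(-q)^{d/2}+1$ — and then correctly identifying $\eta_\varphi$ as an element of $L_1$ through the norm map $N_{d,1}$, so that this divisibility can be applied to the orbit-sum exponent $\sum_{i=0}^{d-1}(-q)^i$.
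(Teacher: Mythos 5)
Your proof is correct, and it handles the $\varphi\neq\bar\varphi$ case by exactly the same pairing argument as the paper. For the self-conjugate even case, however, you take a markedly more computational route than the paper does. The paper's proof is a one-liner: since the only $F$-stable characters in $L$ are $\bf 1$ and $\sigma$ (when $q$ is odd), and those give singleton orbits, the involution $\xi\mapsto\bar\xi$ on a self-conjugate $\varphi$ of size $\geq 2$ has no fixed points, so $\varphi$ decomposes into pairs $\{\xi,\xi^{-1}\}$ and $\prod_{\xi\in\varphi}\xi={\bf 1}$ outright, as an element of $L$.

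You actually derive the key fact underlying that pairing — namely $\xi^{-1}=F^{d/2}\xi$, which says each $F^i\xi$ is paired with $F^{i+d/2}\xi=(F^i\xi)^{-1}$ — but then, instead of concluding directly, you go on to evaluate at a generator $\beta$ of $T_d$, extract the divisibility $N\mid(-q)^{d/2}+1$, identify $\eta_\varphi\in L_1$ through the norm $N_{d,1}$, and verify the divisibility of the orbit-sum exponent $\sum_{i=0}^{d-1}(-q)^i$ by $N$ via the factorization $\bigl(1+(-q)^{d/2}\bigr)\sum_{i=0}^{d/2-1}(-q)^i$. All of this is correct (and your auxiliary claims — that order-$\leq 2$ characters of $T_d$ are $F$-fixed because $q+1$ is even, that $N_{d,1}$ is surjective — are true), but it is substantially more work than the paper's argument buys you: once you know $\bar\xi\in\varphi$ and $\bar\xi\neq\xi$ for every $\xi\in\varphi$, the product over $\varphi$ is trivially ${\bf 1}$ without any reference to $N_{d,1}$, generators of $T_d$, or the arithmetic of $(-q)^i$. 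The payoff of your method is a completely explicit identification of $\eta_\varphi$ as a character of $T_1$, which could be useful elsewhere; its cost is several extra steps whose correctness depends on facts (surjectivity of the norm, the cyclic structure of $T_d$) that the pairing argument avoids entirely.
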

\begin{proof}  First note that since for any $\varphi \in \Theta$, the character $\prod_{\xi \in \varphi} \xi$ is in $L_1$, it follows that $\omega^*_{\blam} \in L_1$.
 
First, if $\varphi \neq \bar{\varphi}$, then both $\varphi$ and $\bar{\varphi}$ appear in the product defining $\omega^*_{\blam}$, and $\blam^{(\varphi)} = \blam^{(\bar{\varphi})}$, since $\bar{\blam} = \blam$.  Now, for any $\xi \in \varphi$, we have $\xi^{-1} = \bar{\xi} \in \bar{\varphi}$.  It follows that for this case, we have, for any $a \in T_1$,
$$ \prod_{\xi \in \varphi} \xi^{|\blam^{(\varphi)}|} (a)_1 \prod_{\xi \in \bar{\varphi}} \xi^{|\blam^{(\bar{\varphi})}|}(a)_1 = 1.$$  

Now suppose that $\varphi = \bar{\varphi}$ and $|\varphi|$ is even.  Since the only elements of $L$ satisfying $\bar{\xi} = \xi$ are ${\bf 1}$ and $\sigma$, which are both singleton orbits in $\Theta$, we know that the elements of $\varphi$ may be paired, each $\xi \in \varphi$ with $\bar{\xi} = \xi^{-1} \in \varphi$.  So, in this case we have
$$ \prod_{\xi \in \varphi}  \xi^{|\blam^{(\varphi)}|} = {\bf 1}.$$
We have accounted for all terms in the product defining $\omega^*_{\blam}$, and so we have $\omega^*_{\blam}(a)_1 = 1$ for every $a \in T_1$.
\end{proof}

\begin{lemma} \label{OddPart} Let $\varphi \in \Theta$ be such that $\bar{\varphi} = \varphi$ and $|\varphi|$ is odd.  Then we must have either $\varphi = \{{\bf 1}\} $ or $\varphi = \{\sigma\}$.
\end{lemma}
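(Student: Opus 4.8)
The statement is about $F$-orbits $\varphi$ on the group $L$ which are fixed by complex conjugation (equivalently by $\xi \mapsto \xi^{-1}$, since $\bar\xi = \xi^{-1}$ for characters of finite groups), and which have odd cardinality. I would argue directly at the level of a single element $\xi \in \varphi$. The plan is to show that if $|\varphi|$ is odd, then $\bar\varphi = \varphi$ forces $\bar\xi = \xi$ for the (any) element $\xi$, and then to invoke the fact recalled in the excerpt that the only elements of $L$ with $\bar\xi = \xi$ are $\mathbf{1}$ and $\sigma$.

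First I would recall the structure of an $F$-orbit: if $\xi \in L$ has $|\varphi| = d$ where $\varphi = [\xi]$, then $\varphi = \{\xi, \xi^{-q}, \xi^{q^2}, \ldots, \xi^{(-q)^{d-1}}\}$, i.e. $\varphi = \{\xi^{(-q)^i} : 0 \le i \le d-1\}$, with $\xi^{(-q)^d} = \xi$. The condition $\bar\varphi = \varphi$ means the involution $\iota: \eta \mapsto \eta^{-1}$ permutes the set $\varphi$. Now $\iota$ commutes with $F$ (since taking inverses commutes with raising to a power), so $\iota$ acts on the cyclic set $\varphi$ in a way compatible with the cyclic $F$-action; concretely, $\iota(\xi) = \xi^{-1}$ must equal $\xi^{(-q)^j}$ for some $0 \le j \le d-1$. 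Applying $F$ repeatedly (or using that $\iota$ intertwines the $F$-action), $\iota$ sends $\xi^{(-q)^i}$ to $\xi^{-(-q)^i} = \xi^{(-q)^{i+j}}$, so on the index set $\intg/d\intg$, $\iota$ is the map $i \mapsto i + j$. Since $\iota$ is an involution, $2j \equiv 0 \pmod d$. Because $d$ is odd, this forces $j \equiv 0 \pmod d$, i.e. $\xi^{-1} = \xi$, so $\bar\xi = \xi$.

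The main (and essentially only) obstacle is this parity/cyclic-group bookkeeping — making sure that $\iota$ genuinely acts as a translation on $\intg/d\intg$ rather than, say, as a reflection; the key point is that $\iota$ commutes with the generator $F$ of the cyclic action, so as a bijection of a $d$-element cyclic set commuting with the cyclic shift it must itself be a power of the shift, hence a translation. Once $\bar\xi = \xi$ is established, the excerpt already records (in the discussion of $\sigma$ and in the proof of Lemma \ref{FirstProdRed}) that the only elements of $L$ fixed by complex conjugation are $\mathbf{1}$ and $\sigma$, and both of these are singleton orbits, so $\varphi = \{\mathbf{1}\}$ or $\varphi = \{\sigma\}$ (consistent with $|\varphi| = 1$ being odd). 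That completes the argument.
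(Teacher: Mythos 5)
Your proposal is correct and is essentially the argument the paper gives: in both, one writes $\bar\xi = \xi\circ F^j$ and uses that inversion is an involution commuting with $F$ to get $2j\equiv 0\pmod{|\varphi|}$, which forces $j=0$ since $|\varphi|$ is odd, so $\bar\xi=\xi$ and hence $\xi=\mathbf 1$ or $\xi=\sigma$. Your packaging via ``a bijection of a cyclic $F$-torsor commuting with $F$ is a translation'' is a slightly cleaner way to reach $2j\equiv 0\pmod{|\varphi|}$ than the paper's case analysis on whether $i=|\varphi|-j$ or $j$ is $\le k$, but the underlying idea is the same.
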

\begin{proof} Suppose that $|\varphi| = 2k+1$ for some $k \geq 0$, so if $\xi \in \varphi$, then $2k+1$ is the smallest integer $i$ such that $\xi \circ F^i = \xi$.  Since we are assuming that $\varphi = \bar{\varphi}$, then there is an integer $j$, $0 \leq j < 2k+1$ such that 
\begin{equation} \label{xi}
\bar{\xi} = \xi \circ F^j. 
\end{equation} 
By composing both sides of (\ref{xi}) with $F^j$ we have $\bar{\xi} \circ F^j = \xi \circ F^{2j}$, and by conjugating both sides of (\ref{xi}), we have $\bar{\xi} \circ F^j = \xi$.  So, $\xi = \xi \circ F^{2j}$.  Letting $i = 2k+1-j$, we have $\bar{\xi} \circ F^i = \xi$, and by a similar argument as above we obtain $\bar{\xi} = \bar{\xi} \circ F^{2i}$.  However, either $i \leq k$ or $j \leq k$, and since $F$ has order $2k+1$ when acting on $\xi$ and on $\bar{\xi}$, then we must have $j=0$ (and $i = 2k+1$).  From (\ref{xi}), we must have $\bar{\xi} = \xi$, and in particular $\xi$ is $F$-stable.  The only such choices for $\xi$ are $\xi = {\bf 1}$ or $\xi = \sigma$.
\end{proof}

\begin{lemma} \label{ProdRed} Let $\blam \in \cP_n^{\Theta}$ such that $\bar{\blam} = \blam$, and let $\omega_{\blam}$ denote the character on $T_1$ defined in Theorem \ref{CentralValues}.  Let $\beta$ be a multiplicative generator for $T_1$.  Then,
$$ \omega_{\blam}(\beta)_1 = (-1)^{|\blam^{(\{\sigma\})}|}.$$
\end{lemma}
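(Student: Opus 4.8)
The plan is to combine the two preceding lemmas with the defining formula for $\omega_{\blam}$ from Theorem \ref{CentralValues}. First I would split the product
\[
\omega_{\blam} = \prod_{\varphi \in \Theta} \prod_{\xi \in \varphi} \xi^{|\blam^{(\varphi)}|}
\]
according to whether the orbit $\varphi$ satisfies ``$\varphi \neq \bar{\varphi}$ or $|\varphi|$ even'' or instead satisfies ``$\bar{\varphi} = \varphi$ and $|\varphi|$ odd.'' The product over the first collection of orbits is precisely $\omega^*_{\blam}$ from Lemma \ref{FirstProdRed}; since $\bar{\blam} = \blam$ by hypothesis, that lemma applies and gives $\omega^*_{\blam}(\beta)_1 = 1$. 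Hence $\omega_{\blam}(\beta)_1$ equals the evaluation at $\beta$ of the product over those orbits $\varphi$ with $\bar{\varphi} = \varphi$ and $|\varphi|$ odd.

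Next I would invoke Lemma \ref{OddPart}, which says the only such orbits are $\{{\bf 1}\}$ and $\{\sigma\}$. The factor coming from $\varphi = \{{\bf 1}\}$ is ${\bf 1}^{|\blam^{(\{{\bf 1}\})}|} = {\bf 1}$, which evaluates to $1$ at $\beta$ regardless of the value of $|\blam^{(\{{\bf 1}\})}|$. The factor coming from $\varphi = \{\sigma\}$ is $\sigma^{|\blam^{(\{\sigma\})}|}$. By the definition of $\sigma$ in Section \ref{Orbits}, $\sigma(\beta)_1 = -1$ because $\beta$ is a multiplicative generator of $T_1$; therefore this factor evaluates to $(-1)^{|\blam^{(\{\sigma\})}|}$ at $\beta$. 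Multiplying the two contributions together with $\omega^*_{\blam}(\beta)_1 = 1$ gives $\omega_{\blam}(\beta)_1 = (-1)^{|\blam^{(\{\sigma\})}|}$, as claimed.

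There is no real obstacle here: the lemma is essentially a bookkeeping consequence of Lemmas \ref{FirstProdRed} and \ref{OddPart}. The only points requiring a moment's care are (i) checking that the two index sets --- ``$\varphi \neq \bar{\varphi}$ or $|\varphi|$ even'' and ``$\bar{\varphi} = \varphi$ and $|\varphi|$ odd'' --- genuinely partition $\Theta$, so that no orbit is double-counted or omitted when the product is split, and (ii) recording that the hypothesis $\bar{\blam} = \blam$ is exactly what licenses the application of Lemma \ref{FirstProdRed}. Both are immediate, so I expect the write-up to be only a few lines.
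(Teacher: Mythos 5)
Your proposal is correct and follows exactly the paper's argument: split the product via Lemma \ref{FirstProdRed} (which handles all orbits with $\varphi \neq \bar{\varphi}$ or $|\varphi|$ even), use Lemma \ref{OddPart} to reduce the remainder to the orbits $\{{\bf 1}\}$ and $\{\sigma\}$, and evaluate $\sigma(\beta)_1 = -1$. The paper's write-up is just a terser version of the same bookkeeping, with the trivial contribution of $\{{\bf 1}\}$ left implicit.
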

\begin{proof}  From the definition of $\omega_{\blam}$, we have 
$$ \omega_{\blam}(\beta)_1 = \prod_{\varphi \in \Theta} \prod_{\xi \in \varphi} \xi^{|\blam^{(\varphi)}|}(\beta)_1.$$ 
Applying Lemmas \ref{FirstProdRed} and \ref{OddPart}, this product reduces to
$$ \omega_{\blam}(\beta)_1 = \sigma(\beta)_1^{|\blam^{(\{\sigma\})}|}.$$
From the definition of the character $\sigma$, $\sigma(\beta)_1 = -1$, giving the result.
\end{proof}

We now prove the main result.  The main idea is that for a real-valued semisimple character $\chi$ of $\U(n, \fifisq)$ (for $q$ odd), through Proposition \ref{sigmacount} and Lemma \ref{ProdRed}, the sign of $\chi(\beta I)$ is the same as the sign of the constant term of the corresponding self-dual polynomial.

\begin{thm} \label{MainThm} Let $n = 2m$, and let $q$ be odd.  The number of semisimple symplectic characters of ${\rm U}(n, \fifisq)$ is $q^{m-1}$.
\end{thm}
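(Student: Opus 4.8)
The plan is to funnel the structural results of Sections \ref{Pre}--\ref{SelfDual} into a single bijective identity and then to count polynomials by an elementary linear algebra argument. Note first that a symplectic character is in particular real-valued (its Frobenius--Schur indicator is $-1 \neq 0$), so a semisimple symplectic character of $\U(2m, \fifisq)$ is one of the characters $\chi^{\blam}$, $\blam \in \cS_{\Theta, 2m}$, appearing in Proposition \ref{Semibijection}, and the task is to count those with $\ep(\chi^{\blam}) = -1$.

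Since $n = 2m$ is even and $q$ is odd, Theorem \ref{FScentral}(2) gives $\ep(\chi^{\blam}) = \omega_{\chi^{\blam}}(\beta I)$ with $\beta$ a generator of $T_1$, so $\chi^{\blam}$ is symplectic if and only if $\omega_{\chi^{\blam}}(\beta I) = -1$. By the central-character form of Theorem \ref{CentralValues}, $\omega_{\chi^{\blam}}(\beta I) = \omega_{\blam}(\beta)_1$, and by Lemma \ref{ProdRed} this equals $(-1)^{|\blam^{(\{\sigma\})}|}$; since $\blam$ is semisimple we have $\blam^{(\{\sigma\})} = (1^{m_{\sigma}})$, so $\chi^{\blam}$ is symplectic exactly when $m_{\sigma}$ is odd. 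Transporting this along the bijection $\rho \circ \Delta$ of Proposition \ref{Semibijection} and invoking Proposition \ref{sigmacount}, which identifies $m_{\sigma}$ with the multiplicity of the factor $x-1$ in $(\rho \circ \Delta)(\blam)$ and hence shows that this self-dual polynomial has constant term $(-1)^{m_{\sigma}}$, I conclude that the number of semisimple symplectic characters of $\U(2m, \fifisq)$ equals the number of monic self-dual polynomials in $\fifi[x]$ of degree $2m$ with constant term $-1$.

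To finish, I count these polynomials directly. Write $g(x) = \sum_{i=0}^{2m} c_i x^i$ with $c_{2m} = 1$ and $c_0 \neq 0$. By (\ref{DualPoly}), $g = \tilde{g}$ is equivalent to $c_j = c_0^{-1} c_{2m-j}$ for all $0 \leq j \leq 2m$; taking $j = 0$ forces $c_0^2 = 1$, so (as $q$ is odd) $c_0 = \pm 1$, and $c_0 = -1$ is the case we want. When $c_0 = -1$ the relations become $c_j = -c_{2m-j}$; with $j = m$ this gives $2 c_m = 0$, hence $c_m = 0$, while $c_{m+1}, \ldots, c_{2m}$ are determined by $c_0 = -1, c_1, \ldots, c_{m-1}$ (and $c_{2m} = 1$ comes out automatically, so the polynomial really has degree $2m$ with nonzero constant term). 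Thus the polynomials in question are in bijection with tuples $(c_1, \ldots, c_{m-1}) \in \fifi^{m-1}$, so there are exactly $q^{m-1}$ of them, which proves the theorem.

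The argument has no genuinely hard step; the care is entirely in the bookkeeping. The main point to verify is that the chain of bijections $\chi^{\blam} \mapsto \blam \mapsto \Delta(\blam) \mapsto \rho(\Delta(\blam))$ is matched to the sign conventions correctly --- in particular that the factors $x + \gamma$ used to define $\rho$ make the $\{\sigma\}$-component of $\blam$ the one producing the $x-1$ factor (this is precisely Proposition \ref{sigmacount}), so that ``symplectic'' corresponds to constant term $-1$ rather than $+1$. One should also confirm that the final parametrization has no coincidences or excluded cases, which is immediate since the monic and constant-term conditions force $c_{2m} = 1$ and $c_0 = -1$. (As an alternative to the coefficient count, one could instead note that division by $x - 1$ gives a bijection between self-dual polynomials of degree $2m$ with constant term $-1$ and self-dual polynomials of degree $2m-1$ with constant term $+1$, but the direct count above is cleanest.)
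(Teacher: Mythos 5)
Your proposal is correct and follows essentially the same route as the paper: reduce via Theorem \ref{FScentral}(2), Theorem \ref{CentralValues}, and Lemma \ref{ProdRed} to counting real-valued semisimple $\chi^{\blam}$ with $|\blam^{(\{\sigma\})}|$ odd, transport this through Propositions \ref{Semibijection} and \ref{sigmacount} to self-dual polynomials of degree $2m$ with constant term $-1$, and count those by the coefficient relations $c_j = -c_{2m-j}$, which force $c_m = 0$ and leave $c_1, \ldots, c_{m-1}$ free, giving $q^{m-1}$. This matches the paper's proof step for step, so no further comment is needed.
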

\begin{proof} From Theorem \ref{FScentral}(2), Theorem \ref{CentralValues}, and Lemma \ref{ProdRed}, we must show that the number of real-valued semisimple characters $\chi^{\blam}$ of $\U(2m, \fifisq)$ such that $|\blam^{(\{\sigma\})}|$ is odd is $q^{m-1}$.

In Proposition \ref{Semibijection}, we give a one-to-one correspondence between real-valued semisimple characters of $\U(n,\fifisq)$ and self-dual polynomials over $\fifi$ of degree $n$.  From Proposition \ref{sigmacount}, in this correspondence, the partition $\blam^{(\{\sigma\})} = (1^{m_{\sigma}})$ corresponds to the factor $(x-1)^{m_{\sigma}}$, and the constant term of the self-dual polynomial corresponding to $\blam$ is $(-1)^{m_{\sigma}}$.  Thus, the number of real-valued semisimple characters of $\U(n, \fifisq)$ such that $m_{\sigma} = |\blam^{(\{\sigma\})}|$ is odd is equal to the number of self-dual polynomials in $\fifi[x]$ of degree $n$ with constant term equal to $-1$.

Suppose $g \in \fifi[x]$, $g(x) =  x^n + a_{n-1} x^{n-1} + \cdots a_1 x + a_0$, is a self-dual polynomial of degree $n = 2m$, and we assume also that $a_0 = -1$.  By (\ref{DualPoly}), we also have
\begin{equation} \label{count}
g(x) = -x^n  g(x^{-1}) = x^n - a_1 x^{n-1} - \cdots - a_{n-1} x - 1.
\end{equation}
Since $n = 2m$ is even, (\ref{count}) implies that we must have $a_m= -a_m$, and so $a_m = 0$.  For $1 \leq i \leq m-1$, we may let $a_i$ be any of $q$ elements of $\fifi$, and then each $a_{n-i} = -a_i$ is determined.  This gives a total of $q^{m-1}$ polynomials.
\end{proof}

As proven in \cite[Theorem 4.4]{GoVi08}, there are exactly $q^m + q^{m-1}$ real-valued semisimple characters of $\U(2m, \fifi)$ when $q$ is odd.  It follows from Theorem \ref{MainThm} that there are exactly $q^m$ semisimple orthogonal characters of $\U(2m, \fifi)$ when $q$ is odd.  As mentioned at the end of Section \ref{RegSemi}, it follows from \cite[Corollary 3.1]{Vi09} that there are exactly $q^{m-1}$ symplectic regular characters and $q^m$ orthogonal regular characters of $\U(2m, \fifi)$ when $q$ is odd.

\end{document}